\def\NAT@def@citea{\def\@citea{\NAT@separator}}% Suppress spaces between citations using natbib.sty
\theoremstyle{plain}% Theorem-like structures provided by amsthm.sty
\newtheorem{theorem}{Theorem}[section]
\newtheorem{lemma}[theorem]{Lemma}
\theoremstyle{definition}
\theoremstyle{remark}
\begin{document}

\articletype{Article Dedicated to the memory of Professor S.~N.~Mergelyan on the occasion of his 95th birthday anniversary}% Specify the article type or omit as appropriate

\title{Solvability of a System of Nonlinear Integral Equations on the Entire Line}

\author{
\name{A.~Kh. Khachatryan\textsuperscript{a}, Kh.~A. Khachatryan\textsuperscript{b}\thanks{CONTACT Kh.~A. Khachatryan. Email: khachatur.khachatryan@ysu.am} and H.S. Petrosyan\textsuperscript{c}}
\affil{\textsuperscript{a}Armenian National Agrarian University, Yerevan, Armenia; \textsuperscript{b}Yerevan State University, Yerevan, Armenia; \textsuperscript{c}Armenian National Agrarian University, Yerevan, Armenia}
}

\maketitle

\begin{abstract}
A system of singular integral equations with monotone and concave nonlinearity in the subcritical case is investigated. The specified system and its scalar analog have direct applications in various areas of physics and biology. In particular, scalar and vector equations of this nature are encountered in the dynamic theory of $p$-adic strings, in the theory of radiative transfer, in the kinetic theory of gases, and in the mathematical theory of the spread of epidemic diseases. A constructive theorem of existence in the space of continuous and bounded vector functions is proved. The integral asymptotic of the constructed solution is studied. An effective iterative process for constructing an approximate solution to this system is proposed. In a certain class of bounded vector functions, the uniqueness of the solution is proved. In the class of bounded vector functions (with non-negative coordinates) with a zero limit at infinity, the absence of an identically zero solution is proved. Examples of the matrix kernel and nonlinearities are given to illustrate the importance of the obtained results. Some of the examples have applications in the above-mentioned branches of natural science.
\end{abstract}

\begin{keywords}
matrix kernel; concavity; monotonicity; successive approximations; singularity;  uniform convergence
\end{keywords}

\section{Introduction}
\subsection{Statement of the problem.}
Let us consider the following class of nonlinear vector singular integral equations on $\mathbb{R}:=(-\infty,+\infty):$
\begin{equation}\label{Khachatryan1}
f_i(x)=\sum\limits_{j=1}^N\int\limits_{-\infty}^\infty K_{ij}(x-t) \mu_j (t)G_j(f_j(t))dt,\quad x\in\mathbb{R},\quad i=\overline{1,N}
\end{equation}
with respect to the desired vector function $f(x)=(f_1(x),\ldots,f_N(x))^T,$ where $T$ is the transpose sign. The solution of system \eqref{Khachatryan1} is sought in the following class of measurable vector functions $$\Omega:=\{f(x)=(f_1(x),\ldots,f_N(x))^T: f_j\in L_\infty(\mathbb{R}),\, f_j(x)\geq0, \, x\in\mathbb{R},\, j=\overline{1,N}\}.$$ The elements of the matrix kernel  $K(\tau)=(K_{ij}(\tau))_{i,j=1}^{N\times N}$ of system \eqref{Khachatryan1} satisfy the following conditions:
\begin{enumerate}
  \item [1)] $K_{ij}(-\tau)=K_{ij}(\tau)=K_{ji}(\tau)>0,$ $\tau\in\mathbb{R},$ $K_{ij}\in L_1(\mathbb{R})\cap L_\infty(\mathbb{R}),$ $i,j=\overline{1,N},$
  \item [2)] the spectral radius of the matrix  $A=\left(\int\limits_{-\infty}^\infty K_{ij}(\tau)d\tau\right)_{i,j=1}^{N\times N}$ is equal to one, and $\int\limits_0^\infty \tau K_{ij}(\tau)d\tau<+\infty,$ $i,j=\overline{1,N}.$
\end{enumerate}
Functions $\mu_j(t),$ $j=\overline{1,N}$  have the following properties:
\begin{enumerate}
  \item [a)] $\mu_j(t)>1,$ $t\in\mathbb{R},$ $j=\overline{1,N}$
  \item [b)] $\mu_j-1\in L_1^0(\mathbb{R}),$ $j=\overline{1,N},$
\end{enumerate}
where $L_1^0(\mathbb{R})$ is the space of Lebesgue-summable functions on  $\mathbb{R}$ that have zero limit at $\pm\infty.$

Denote by  $a_{ij}:=\int\limits_{-\infty}^\infty K_{ij}(\tau)d\tau,$ $i,j=\overline{1,N}.$  Then, according to Perron's theorem  (see \cite{lan1}) there exists a vector  $\eta=(\eta_1,\ldots,\eta_N)^T$ with positive coordinates  $\eta_i,$ $i=\overline{1,N}$ such that
\begin{equation}\label{Khachatryan2}
\sum\limits_{j=1}^N a_{ij}\eta_j=\eta_i, \quad i=\overline{1,N}.
\end{equation}
We fix this vector and impose the following conditions on the nonlinearities $G_j(u),$ $j=\overline{1,N}:$
\begin{enumerate}
  \item [I)] $G_j\in C[0,+\infty),$ $G_j(u)$ monotonically increases in  $u$ on the set $[0,+\infty),$ $j=\overline{1,N},$
  \item [II)] $G_j(0)=0,$ $G_j(\eta_j)=\eta_j,$ $j=\overline{1,N},$
  \item [III)] the functions  $G_j(u)$ are strictly concave on the set  $[0,+\infty),$ $j=\overline{1,N},$
  \item [IV)] there exists a mapping  $\varphi: [0,1]\rightarrow[0,1]$ with the properties:   $\varphi(0)=0,$ $\varphi(1)=1,$ $\varphi\in C[0,1],$ $\varphi$ monotonically increases on $[0,1],$ $\varphi$ concave on the segment  $[0,1]$ such that the following inequalities hold:
      $$G_j(\sigma u)\geq \varphi(\sigma)G_j(u),\quad \sigma\in[0,1], \quad u\in[\eta_j,\xi_j],\quad j=\overline{1,N},$$
      where the numbers  $\xi_j (\xi_j>\eta_j),$ $j=\overline{1,N}$ are uniquely determined from the following system of nonlinear algebraic equations:
   \begin{equation}\label{Khachatryan3}
\tau_i=\sum\limits_{j=1}^N (a_{ij}+b_{ij})G_j(\tau_j),\quad i=\overline{1,N}.
\end{equation}
In system \eqref{Khachatryan3} the elements  $b_{ij}$ are defined by the following formula
\begin{equation}\label{Khachatryan4}
b_{ij}:= \int\limits_{-\infty}^\infty (\mu_j(t)-1)dt \sup\limits_{\tau\in\mathbb{R}}(K_{ij}(\tau)),\quad i,j=\overline{1,N}.
\end{equation}
It should be noted that the existence of a unique solution  $\xi=(\xi_1,\ldots,\xi_N)^T,$ $\xi_j>\eta_j,$ $j=\overline{1,N}$ of the nonlinear system \eqref{Khachatryan3} follows from Lemmas~2.1 and 2.2 of \cite{khach2}.
\end{enumerate}
The main goal of the paper is to study the issues of existence, uniqueness, absence, and asymptotic behavior at $\pm\infty$ of a nontrivial solution of the system of nonlinear singular (the singularity is reflected in the functions $\mu_j(t),$ $j=\overline{1,N}$) integral equations  \eqref{Khachatryan1} in the class  $\Omega.$
\subsection{Possible applications and history of the question.}
System \eqref{Khachatryan1} has important practical significance in various areas of natural science. In particular, system  \eqref{Khachatryan1} and the corresponding scalar analogue of this system are encountered in the dynamical theory of $p$-adic open and open-closed strings for the scalar field of tachyons, in the theory of radiation transfer in inhomogeneous media, in the kinetic theory of gases within the framework of the modified Bhatnagar-Gross-Krook model, in the mathematical theory of the spread of epidemic diseases within the framework of the usual and modified Atkinson-Reuter and Dieckmann-Kaper models (see \cite{vla3,brek4,vla5,khach6,eng7,cerc8,khach9,diek10,diek11,atk12}). It is also interesting to note that the scalar analogue of system \eqref{Khachatryan1} has an important application in cosmology  (see \cite{aref13} and \cite{aref14}).

The scalar analogue of system  \eqref{Khachatryan1}, under various restrictions on the kernel and on nonlinearity, is studied in  \cite{vla3,vla5,khach6,diek11}, \cite{khach15,khach16,khach17,petr18,khach19,khach20}. For example, in the case when $N=1,$ $K(\tau)=\dfrac{1}{\sqrt{\pi}}e^{-\tau^2},$ $\tau\in\mathbb{R},$ $\mu(t)\equiv1,$ $t\in\mathbb{R},$ and $G(u)=\sqrt[p]{u}$ ($p>2$ is an odd number), the existence of a monotonically increasing odd continuous and bounded on $\mathbb{R}$ solution of the scalar analogue of system \eqref{Khachatryan1} was proved in \cite{vla3}.  Later, in  \cite{khach15,khach16,khach17,petr18} not  only were these results generalized for general even summable bounded kernels $K$ depending on the difference of their arguments and general concave monotone continuous nonlinearities  $G,$ with $\mu(t)\equiv1,$ $t\in\mathbb{R},$ but also uniqueness theorems for the constructed solution were proved in various subclasses of measurable and bounded functions on $\mathbb{R}$. In the case where $\mu(t)>1,$ $t\in\mathbb{R},$ $\mu-1\in L_1^0(\mathbb{R}),$ $G(u)=\sqrt[p]{u},$ and the kernel  $K$ is represented as a Gaussian distribution, the issues of existence and study of the asymptotic behavior (at $\pm\infty$) of a bounded nontrivial solution of the scalar analogue of system \eqref{Khachatryan1} were investigated in \cite{vla5}.

These results were generalized for general kernels $K$ (with properties  $1), 2),$ for $N=1$) and nonlinearities  $G$ satisfying conditions of type  $I)-III)$ (for $N=1$) in \cite{khach19} and \cite{khach20}. It should also be noted that when  $\mu_j(t)\equiv1,$ $j=\overline{1,N}$ system \eqref{Khachatryan1} was studied in  \cite{khach21} and \cite{david22} under conditions  $1), 2), I)-III).$ In the mentioned works, theorems of existence and uniqueness of an alternating monotonically non-decreasing continuous and bounded solution are proved. Moreover, in  \cite{david22} conditions are given under which system  \eqref{Khachatryan1} with $\mu_j(t)\equiv1,$ $j=\overline{1,N}$ has only trivial solutions in the class  $\Omega.$

The main tool for studying the special case of system \eqref{Khachatryan1} (i.e. when  $\mu_j(t)\equiv1,$ $j=\overline{1,N}$) was the well-known theorem of M.A. Krasnosel'skii on a fixed point of a nonlinear monotone continuous operator leaving a certain conical segment invariant in regular cones (see \cite{kras23}).
\subsection{Summary of the obtained results.}
In this paper, we will study the issues of constructive existence, uniqueness and absence of a non-trivial non-negative continuous and bounded solution for the singular system \eqref{Khachatryan1} with boundary conditions on $\pm\infty:$
\begin{equation}\label{Khachatryan5}
\lim\limits_{x\rightarrow\pm\infty}f_i(x)=\alpha_i^\pm\geq0,\quad i=\overline{1,N}.
\end{equation}
In particular, it will be proved that if  $\alpha_i^\pm=0,$ $i=\overline{1,N},$ then problem  \eqref{Khachatryan1}, \eqref{Khachatryan5} in the class  $\Omega$ has
only a zero solution. In the case when there exists  an index
 $i\in\{1,2,\ldots,N\}$ such that $\alpha_i^\pm>0,$ we will prove the existence of a unique non-negative, non-trivial bounded and continuous solution to problem \eqref{Khachatryan1}, \eqref{Khachatryan5}. Moreover, it will be established that  $\alpha_i^\pm=\eta_i$ and $f_i-\eta_i\in L_1(\mathbb{R}),$ $i=\overline{1,N}.$ At the end of the paper, we will give specific particular examples of matrix kernels $K(\tau),$ functions $\mu_j(t),$ $j=\overline{1,N}$ and nonlinearities $G_j(u),$ $j=\overline{1,N}$ satisfying all the restrictions of the proved statements. Some of the given examples will also have an applied character. Note that the existence theorem will be of a constructive nature, since we propose a special iterative process that uniformly converges to a continuous solution of system \eqref{Khachatryan1} with the speed of a geometric progression.
\section{Notations and auxiliary facts}
\subsection{A priori estimates.}
The following a priori estimate for an arbitrary solution  $f\in\Omega$ of system  \eqref{Khachatryan1} plays an important role in further reasoning. The following Lemma holds:
\begin{lemma}
Under conditions  $1), 2), a), b)$ and $I)-III)$ the coordinates of any solution  $f\in\Omega$ of system  \eqref{Khachatryan1} satisfy the following  estimation from above:
\begin{equation}\label{Khachatryan6}
f_i(x)\leq\xi_i,\quad x\in\mathbb{R},\quad i=\overline{1,N}.
\end{equation}
\end{lemma}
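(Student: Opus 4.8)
The plan is to reduce the functional estimate to a finite-dimensional comparison between the sup-norms of the coordinates of $f$ and the numbers $\xi_i$ determined by \eqref{Khachatryan3}, and then to extract the sharp bound from strict concavity.

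First I would set $M_j:=\mathrm{ess}\,\sup_{x\in\mathbb{R}}f_j(x)$, which is finite since $f_j\in L_\infty(\mathbb{R})$ and nonnegative since $f\in\Omega$. In \eqref{Khachatryan1} I would write $\mu_j(t)=1+(\mu_j(t)-1)$ and split each summand into two integrals. Monotonicity of $G_j$ (condition $I)$) together with $f_j(t)\le M_j$ gives $G_j(f_j(t))\le G_j(M_j)$ a.e.; estimating $K_{ij}(x-t)$ by $\sup_\tau K_{ij}(\tau)$ in the second integral, recalling $\mu_j(t)-1>0$ (conditions $a),b)$) and the definition \eqref{Khachatryan4} of $b_{ij}$, I obtain the pointwise bound
\[
f_i(x)\le\sum_{j=1}^N (a_{ij}+b_{ij})\,G_j(M_j),\qquad \mbox{a.e. } x\in\mathbb{R}.
\]
Taking the essential supremum over $x$ yields the vector inequality $M_i\le\sum_{j=1}^N (a_{ij}+b_{ij})G_j(M_j)$ for $i=\overline{1,N}$.

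Next I would compare $M=(M_1,\dots,M_N)^T$ with $\xi$. Put $t^\ast:=\max_{1\le i\le N}M_i/\xi_i$, finite since every $\xi_i>0$. If $t^\ast\le 1$, then $M_i\le\xi_i$ and we are done. Otherwise $t^\ast>1$ and there is an index $i_0$ with $M_{i_0}=t^\ast\xi_{i_0}$ while $M_j\le t^\ast\xi_j$ for all $j$. The crucial observation is that strict concavity (condition $III)$) together with $G_j(0)=0$ (condition $II)$) forces $G_j(t^\ast\xi_j)<t^\ast G_j(\xi_j)$: applying strict concavity to the representation $\xi_j=\tfrac{1}{t^\ast}\,(t^\ast\xi_j)+\bigl(1-\tfrac{1}{t^\ast}\bigr)\cdot 0$ (legitimate because $t^\ast\xi_j\neq 0$ and $\tfrac{1}{t^\ast}\in(0,1)$) gives $G_j(\xi_j)>\tfrac{1}{t^\ast}G_j(t^\ast\xi_j)$. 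Combining monotonicity with this estimate and using that the coefficients $a_{i_0 j}+b_{i_0 j}$ are positive (since $K_{i_0 j}>0$), I arrive at
\[
M_{i_0}\le\sum_{j=1}^N (a_{i_0 j}+b_{i_0 j})\,G_j(t^\ast\xi_j)<t^\ast\sum_{j=1}^N (a_{i_0 j}+b_{i_0 j})\,G_j(\xi_j)=t^\ast\xi_{i_0}=M_{i_0},
\]
where the last equality uses that $\xi$ solves \eqref{Khachatryan3}. This contradiction excludes $t^\ast>1$, whence $M_i\le\xi_i$ and therefore $f_i(x)\le M_i\le\xi_i$ a.e.; identifying each $f_i$ with the continuous representative furnished by the right-hand side of \eqref{Khachatryan1} gives \eqref{Khachatryan6} for every $x\in\mathbb{R}$.

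The integral estimate of the first step is routine; the main obstacle is the finite-dimensional comparison. A naive componentwise argument fails, because $M_i\le\sum_j(a_{ij}+b_{ij})G_j(M_j)$ only locates $M$ below the graph of $u\mapsto\sum_j(a_{ij}+b_{ij})G_j(u_j)$ rather than at its fixed point, and ordinary concavity yields only the non-strict inequality $G_j(t^\ast\xi_j)\le t^\ast G_j(\xi_j)$, which produces no contradiction. The scaling argument at the tight index $i_0$ is exactly what converts the inequality into the sharp bound $M\le\xi$, and it is precisely here that the strict concavity hypothesis $III)$ is indispensable.
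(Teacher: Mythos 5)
Your proof is correct, and its first half coincides with the paper's: the paper likewise sets $c_i:=\sup_{x\in\mathbb{R}}f_i(x)$, splits $\mu_j=1+(\mu_j-1)$, bounds $K_{ij}$ by its supremum in the singular part, and arrives at exactly your vector inequality $c_i\leq\sum_{j=1}^N(a_{ij}+b_{ij})G_j(c_j)$. Where you genuinely diverge is the finite-dimensional comparison with $\xi$. The paper does not scale: it divides by system \eqref{Khachatryan3} to get $c_i/\xi_i\leq\max_{1\leq j\leq N}\bigl(G_j(c_j)/G_j(\xi_j)\bigr)$, evaluates this at the index $j_0$ attaining the maximum to obtain the self-referential inequality $c_{j_0}/\xi_{j_0}\leq G_{j_0}(c_{j_0})/G_{j_0}(\xi_{j_0})$, and concludes $c_{j_0}\leq\xi_{j_0}$ from the monotone decrease of $u\mapsto G_{j}(u)/u$ on $(0,+\infty)$, after which monotonicity of $G_{j_0}$ forces the maximum to be at most $1$ and hence $c_i\leq\xi_i$ for all $i$ — a direct argument, no contradiction. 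Your version instead maximizes the ratio of the values themselves, $t^\ast=\max_i M_i/\xi_i$, and derives a strict-inequality contradiction at the tight index via $G_j(t^\ast\xi_j)<t^\ast G_j(\xi_j)$. These rest on the same underlying fact — given $G_j(0)=0$, strict concavity, i.e.\ strict decrease of $G_j(u)/u$, is exactly the statement $G_j(tu)<tG_j(u)$ for $t>1$ — so neither route is more general, but yours makes the indispensability of the strictness in condition $III)$ more visible (your closing remark on this is accurate, and the paper needs strictness at the same point: with $G_j(u)/u$ merely non-increasing, the paper's step from \eqref{Khachatryan10} to \eqref{Khachatryan11} would also fail). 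One small simplification available to you: the final appeal to a continuous representative is unnecessary, since \eqref{Khachatryan1} holds for every $x\in\mathbb{R}$ and $G_j(f_j(t))\leq G_j(M_j)$ a.e.\ in $t$ already gives the bound $f_i(x)\leq\sum_j(a_{ij}+b_{ij})G_j(M_j)\leq\xi_i$ pointwise for all $x$ (continuity of solutions is established separately in the paper, in \eqref{Khachatryan15}).
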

\begin{proof}
Let us denote by  $c_i:=\sup\limits_{x\in\mathbb{R}}f_i(x),$ $i=\overline{1,N}.$ Since $f\in\Omega,$ then from  \eqref{Khachatryan1} by conditions  $I), II), 1), 2), a)$ and $b)$ we have
$$0\leq f_i(x)\leq \sum\limits_{j=1}^N \int\limits_{-\infty}^\infty K_{ij}(x-t)\mu_j(t)G_j(c_j)dt\leq$$
$$\leq \sum\limits_{j=1}^N G_j(c_j) \left( \int\limits_{-\infty}^\infty K_{ij}(x-t)dt+ \int\limits_{-\infty}^\infty (\mu_j(t)-1)dt \sup\limits_{\tau\in\mathbb{R}}(K_{ij}(\tau))\right)=$$
$$=\sum\limits_{j=1}^N(a_{ij}+b_{ij})G_j(c_j),\quad x\in\mathbb{R},\quad i=\overline{1,N},$$
whence it follows that
\begin{equation}\label{Khachatryan7}
c_i\leq\sum\limits_{j=1}^N(a_{ij}+b_{ij})G_j(c_j),\quad  i=\overline{1,N}.
\end{equation}
We now prove that  $c_i\leq \xi_i,$ $i=\overline{1,N},$ where $\xi=(\xi_1,\ldots,\xi_N)^T,$ $(\xi_j>\eta_j,\ j=\overline{1,N})$ is the unique solution of system  \eqref{Khachatryan3}.

Indeed, firstly, from  \eqref{Khachatryan3} and \eqref{Khachatryan7} it follows that
$$0\leq c_i\leq \sum\limits_{j=1}^N(a_{ij}+b_{ij})G_j(\xi_j)\max\limits_{1\leq j\leq N}\left(\frac{G_j(c_j)}{G_j(\xi_j)}\right)=\xi_i \max\limits_{1\leq j\leq N}\left(\frac{G_j(c_j)}{G_j(\xi_j)}\right),\ i=\overline{1,N}.$$
whence we obtain
\begin{equation}\label{Khachatryan8}
0\leq \frac{c_i}{\xi_i}\leq \max\limits_{1\leq j\leq N} \left(\frac{G_j(c_j)}{G_j(\xi_j)}\right), \ \ i=\overline{1,N}.
\end{equation}
Obviously, there exists an index  $j_0\in \{1,2,\ldots,N\}$ such that
\begin{equation}\label{Khachatryan9}
\max\limits_{1\leq j\leq N} \left(\frac{G_j(c_j)}{G_j(\xi_j)}\right)= \frac{G_{j_0}(c_{j_0})}{G_{j_0}(\xi_{j_0})}.
\end{equation}
In inequality \eqref{Khachatryan8} taking $i=j_0$ and considering  \eqref{Khachatryan9} we obtain
\begin{equation}\label{Khachatryan10}
0\leq \frac{c_{j_0}}{\xi_{j_0}}\leq \frac{G_{j_0}(c_{j_0})}{G_{j_0}(\xi_{j_0})}.
\end{equation}
Note that from conditions  $I)-III)$ it immediately follows that the functions $y=\frac{G_j(u)}{u},$ $j=\overline{1,N}$ monotonically decrease on the  $(0,+\infty).$ Therefore, from  \eqref{Khachatryan10} we arrive at the inequality
\begin{equation}\label{Khachatryan11}
0\leq c_{j_0}\leq \xi_{j_0}.
\end{equation}
Since the functions  $G_j(u),$ $j=\overline{1,N}$ are monotonically increasing on  $[0,+\infty),$ from \eqref{Khachatryan8}, \eqref{Khachatryan9} and \eqref{Khachatryan11} we get
$$0\leq \frac{c_{i}}{\xi_{i}}\leq \frac{G_{j_0}(c_{j_0})}{G_{j_0}(\xi_{j_0})}\leq1,\quad i=\overline{1,N}.$$
Thus $f_i(x)\leq c_i\leq\xi_i,$ $x\in\mathbb{R},$ $i=\overline{1,N}$ and the lemma is proved.
\end{proof}
The following Lemma also holds
\begin{lemma}
Let conditions  $1), 2), a), b), I)$ and $II)$ are fulfilled.  Then, if there exists $j_0\in\{1,2,\ldots, N\}$ such that  $\alpha_{j_0}^\pm>0,$ then for any solution  $f\in\Omega$ of problem  \eqref{Khachatryan1}, \eqref{Khachatryan5} the following inequalities from below hold:
\begin{equation}\label{Khachatryan12}
\beta_j:=\inf\limits_{x\in\mathbb{R}}f_j(x)>0,\quad j=\overline{1,N}.
\end{equation}
\end{lemma}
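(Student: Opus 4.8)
The plan is to bound every coordinate $f_i$ below by a single positive constant, using only the one coordinate $j_0$ whose boundary values are known to be positive, and exploiting the everywhere-positivity of the kernel entries to transport this positivity to all indices $i$. First I would invoke the boundary condition \eqref{Khachatryan5}: since $\alpha_{j_0}^+>0$ and $\alpha_{j_0}^->0$, there exist $R>0$ and $\delta:=\tfrac12\min\{\alpha_{j_0}^+,\alpha_{j_0}^-\}>0$ such that $f_{j_0}(t)\ge\delta$ whenever $|t|>R$. In the right-hand side of \eqref{Khachatryan1} I would then discard all summands except the one with $j=j_0$ (each discarded term is nonnegative, as $K_{ij}>0$, $\mu_j>1$, and $G_j(f_j)\ge G_j(0)=0$), restrict the surviving integral to $\{|t|>R\}$, and use $\mu_{j_0}(t)>1$ together with the monotonicity of $G_{j_0}$ and $G_{j_0}(0)=0$ (so that $G_{j_0}(\delta)>0$). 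This yields, for every $i$ and every $x\in\mathbb{R}$,
\[
f_i(x)\ge\int_{|t|>R}K_{ij_0}(x-t)\mu_{j_0}(t)G_{j_0}(f_{j_0}(t))\,dt\ge G_{j_0}(\delta)\int_{|t|>R}K_{ij_0}(x-t)\,dt.
\]

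The crux is then to show that the tail mass $I_i(x):=\int_{|t|>R}K_{ij_0}(x-t)\,dt$ is bounded below by a positive constant uniformly in $x$. Changing variables $s=x-t$ I would write $I_i(x)=a_{ij_0}-\int_{x-R}^{x+R}K_{ij_0}(s)\,ds$. Because $K_{ij_0}\in L_1(\mathbb{R})$, the windowed integral $x\mapsto\int_{x-R}^{x+R}K_{ij_0}(s)\,ds$ is continuous and tends to $0$ as $|x|\to\infty$; hence it attains its supremum at some finite point $x^{\ast}$. Since $K_{ij_0}(s)>0$ for all $s$ by condition~$1)$, there is strictly positive mass outside the compact window $[x^{\ast}-R,x^{\ast}+R]$, so $\int_{x^{\ast}-R}^{x^{\ast}+R}K_{ij_0}<a_{ij_0}$. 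Therefore $\gamma_i:=\inf_{x}I_i(x)=a_{ij_0}-\int_{x^{\ast}-R}^{x^{\ast}+R}K_{ij_0}>0$, and combining with the previous display gives $\beta_i=\inf_x f_i(x)\ge G_{j_0}(\delta)\gamma_i>0$ for each $i=\overline{1,N}$, which is exactly \eqref{Khachatryan12}.

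I expect the uniform lower bound on $I_i(x)$ to be the only genuinely delicate point, and it splits into two regimes: for $|x|$ large the $L_1$-decay of $K_{ij_0}$ forces $I_i(x)\to a_{ij_0}$, so no degeneracy can occur there, while on any compact range of $x$ it is the strict positivity $K_{ij_0}>0$ together with continuity of the windowed integral that keeps $I_i(x)$ away from zero. It is worth stressing that this argument needs neither continuity of $f$ nor a passage to the limit in \eqref{Khachatryan1}: positivity is propagated from the single index $j_0$ to all $i$ purely through the positivity of the entries $K_{ij_0}$, which here plays the role of irreducibility. Conditions~$a)$--$b)$ enter only through the trivial bound $\mu_{j_0}\ge1$; the full strength of $\mu_{j_0}-1\in L_1^0(\mathbb{R})$ is not needed for this lower estimate.
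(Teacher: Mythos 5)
Your proof is correct, but it settles the crucial uniformity step by a genuinely different mechanism than the paper. The paper also begins with $f_i(x)\ge G_{j_0}(\delta)\int_{|t|>r}K_{ij_0}(x-t)\,dt>0$, but treats this only as pointwise positivity; to make it uniform it first proves that any solution $f\in\Omega$ is continuous, by splitting \eqref{Khachatryan1} into convolutions of $L_1$ kernels with $L_\infty$ functions (this is where condition $b)$ enters), then applies the Weierstrass theorem on the compact segment $[-r,r]$ to get $\min_{[-r,r]}f_{j_0}>0$, concludes $\beta_{j_0}>0$, and finally bootstraps to all coordinates via $f_i(x)\ge G_{j_0}(\beta_{j_0})\int_{-\infty}^{\infty}K_{ij_0}(x-t)\mu_{j_0}(t)\,dt$. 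You instead apply the compactness argument to the \emph{kernel} rather than to the solution: the windowed mass $W(x)=\int_{x-R}^{x+R}K_{ij_0}(s)\,ds$ is continuous (it is a difference of values of the absolutely continuous indefinite integral of an $L_1$ function), tends to $0$ at $\pm\infty$, hence attains its supremum at some finite $x^{*}$, and the strict positivity $K_{ij_0}>0$ forces $W(x^{*})<a_{ij_0}$, so $\inf_x\int_{|t|>R}K_{ij_0}(x-t)\,dt=a_{ij_0}-W(x^{*})>0$ and all $N$ lower bounds drop out simultaneously. Your route is more elementary and slightly more general: it requires neither the continuity of $f$ (so no convolution lemma) nor condition $b)$, only $\mu_{j_0}\ge 1$, and it applies to merely measurable $f\in\Omega$. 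What the paper's route buys is the continuity inclusion \eqref{Khachatryan15} itself, which it needs anyway and reuses repeatedly afterwards (in Lemmas 2.4, 2.6, 2.9 and Theorem 4.1), so proving it inside this lemma does double duty. Incidentally, your choice $\delta=\tfrac12\min\{\alpha_{j_0}^{+},\alpha_{j_0}^{-}\}$ quietly corrects a slip in the paper, whose \eqref{Khachatryan13} has $\max$ where $\min$ is needed.
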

\begin{proof}
First, note that from condition $\alpha_{j_0}^\pm>0$ it immediately follows that there exists a number $r>0$ such that for $|x|>r$ the following inequality holds
\begin{equation}\label{Khachatryan13}
f_{j_0}(x)>\max \left(\frac{\alpha_{j_0^+}}{2}, \frac{\alpha_{j_0^-}}{2}\right)=:\delta>0.
\end{equation}
Taking into account  \eqref{Khachatryan13}, as well as conditions  $1), a), I)$ and $II)$ from \eqref{Khachatryan1} we obtain
\begin{equation}\label{Khachatryan14}
\begin{array}{c}
\displaystyle f_i(x)\geq \int\limits_{-\infty}^\infty K_{ij_0}(x-t)\mu_{j_0}(t)G_{j_0}(f_{j_0}(t))dt\geq \\
\displaystyle\geq G_{j_0}(\delta) \int\limits_{|t|>r}K_{ij_0}(x-t)dt>0,\quad x\in\mathbb{R},\quad i=\overline{1,N}.
\end{array}
\end{equation}
We now verify that  $f_i\in C(\mathbb{R}),$ $i=\overline{1,N}.$ To prove these inclusions, we use the following known fact in the operation convolutions:  if  $\psi_1\in L_1(\mathbb{R}),$ $\psi_2\in L_\infty(\mathbb{R}),$ that $(\psi_1*\psi_2)(x):= \int\limits_{-\infty}^\infty \psi_1(x-t)\psi_2(t)dt$ represents continuous function  on set   $\mathbb{R}$ (see \cite{rud24}). Using the last fact, conditions  $1), b), I), II)$ and the fact that  $f\in\Omega,$ from the relations
$$f_i(x)=\sum\limits_{j=1}^N \int\limits_{-\infty}^\infty K_{ij}(t)G_{j}(f_{j}(x-t))(\mu_{j}(x-t)-1)dt+$$
$$+\sum\limits_{j=1}^N \int\limits_{-\infty}^\infty K_{ij}(x-t)G_{j}(f_{j}(t))dt,\quad x\in\mathbb{R},\quad i=\overline{1,N}$$
we arrive at the inclusions
\begin{equation}\label{Khachatryan15}
f_i\in C(\mathbb{R}),\quad i=\overline{1,N}.
\end{equation}
From \eqref{Khachatryan14} and \eqref{Khachatryan15} according to the Weierstrass theorem, we obtain that
\begin{equation}\label{Khachatryan16}
\min\limits_{x\in[-r,r]}f_{j_0}(x)>0.
\end{equation}
Thus, from  \eqref{Khachatryan13} and \eqref{Khachatryan16} it follows that
\begin{equation}\label{Khachatryan17}
\beta_{j_0}=\inf\limits_{x\in\mathbb{R}}f_{j_0}(x)>0.
\end{equation}
Using inequality  \eqref{Khachatryan17} and conditions  $1), 2), a), I), II),$ from \eqref{Khachatryan1} we obtain
$$f_i(x)\geq G_{j_0}(\beta_{j_0}) \int\limits_{-\infty}^\infty K_{ij_0}(x-t)\mu_{j_0}(t)dt>  G_{j_0}(\beta_{j_0}) \int\limits_{-\infty}^\infty K_{ij_0}(x-t)dt=$$$$= G_{j_0}(\beta_{j_0}),\ x\in\mathbb{R},\ i=\overline{1,N}, $$
whence it follows that  $\beta_i\geq G_{j_0}(\beta_{j_0})>0,$ $i=\overline{1,N}.$ Thus, the lemma is completely proved.
\end{proof}
Using Lemma~2.2, we can prove the following more precise estimation from below for the quantities   $\beta_i,$ $i=\overline{1,N}.$
\begin{lemma}
Under the conditions of Lemma~2.2, if condition  $III),$  is also satisfied, then the following estimation from below  holds
\begin{equation}\label{Khachatryan18}
\beta_i\geq \eta_i,\quad i=\overline{1,N}.
\end{equation}
\end{lemma}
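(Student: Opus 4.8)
The plan is to first derive from the integral equation a scalar recursive inequality for the infima $\beta_i$ that mirrors the one obtained for the suprema in the proof of Lemma~2.1, and then to compare it with the Perron relation \eqref{Khachatryan2} through a minimality argument.

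First I would use the monotonicity of each $G_j$ together with the pointwise lower bound $f_j(t)\geq\beta_j$ guaranteed by Lemma~2.2 to estimate the integrand from below. Since $K_{ij}(\tau)>0$ and $\mu_j(t)>1$, dropping the nonnegative excess $\mu_j(t)-1$ and using $\int_{-\infty}^\infty K_{ij}(x-t)\,dt=a_{ij}$ gives
$$f_i(x)\geq\sum_{j=1}^N G_j(\beta_j)\int_{-\infty}^\infty K_{ij}(x-t)\mu_j(t)\,dt\geq\sum_{j=1}^N a_{ij}G_j(\beta_j),\quad x\in\mathbb{R},$$
and hence, taking the infimum over $x$,
$$\beta_i\geq\sum_{j=1}^N a_{ij}G_j(\beta_j),\quad i=\overline{1,N}.$$
On the other hand, condition $II)$ turns \eqref{Khachatryan2} into $\eta_i=\sum_{j=1}^N a_{ij}G_j(\eta_j)$, so the two systems have exactly the same structural form, with $\beta$ satisfying an inequality and $\eta$ an equality.

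Next I would introduce $m:=\min_{1\leq i\leq N}(\beta_i/\eta_i)$, which is strictly positive because $\beta_i>0$ by Lemma~2.2 and $\eta_i>0$, and argue by contradiction assuming $m<1$. Let the minimum be attained at an index $i_0$, so that $\beta_{i_0}=m\eta_{i_0}$ and $\beta_j\geq m\eta_j$ for every $j$. Here the strict concavity of $G_j$ enters: writing $m\eta_j=m\cdot\eta_j+(1-m)\cdot 0$ as a nontrivial convex combination (legitimate since $\eta_j>0$ and $m\in(0,1)$) and invoking $G_j(0)=0$, condition $III)$ yields $G_j(m\eta_j)>mG_j(\eta_j)=m\eta_j$; by monotonicity this gives $G_j(\beta_j)\geq G_j(m\eta_j)>m\eta_j$ for each $j$.

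Finally I would substitute these strict bounds into the recursive inequality at $i=i_0$. Since every $a_{i_0 j}>0$ (each $K_{i_0 j}$ is strictly positive and summable), the strict inequalities survive summation:
$$\beta_{i_0}\geq\sum_{j=1}^N a_{i_0 j}G_j(\beta_j)>\sum_{j=1}^N a_{i_0 j}\,m\eta_j=m\sum_{j=1}^N a_{i_0 j}\eta_j=m\eta_{i_0}=\beta_{i_0},$$
a contradiction. Hence $m\geq 1$, that is, $\beta_i\geq\eta_i$ for all $i=\overline{1,N}$. I expect the delicate point to be precisely the propagation of strictness: one must ensure both that concavity produces a genuine strict gain $G_j(m\eta_j)>m\eta_j$ (which would fail without strict concavity, or if $m=0$) and that the positivity of all $a_{i_0 j}$ upgrades the pointwise strict inequalities into a strict inequality for the sum. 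The positivity of the $\beta_j$ supplied by Lemma~2.2 is exactly what secures $m>0$ and thereby justifies the convex-combination step.
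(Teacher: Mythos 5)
Your proof is correct and takes essentially the same route as the paper: both start from the recursive inequality $\beta_i\geq\sum_{j=1}^N a_{ij}G_j(\beta_j)$ and close it against the Perron relation \eqref{Khachatryan2} by an extremal-index argument, with Lemma~2.2 securing the positivity that the concavity step requires. The only cosmetic difference is that the paper argues directly with the index minimizing $G_j(\beta_j)/\eta_j$ and the monotone decrease of $G_j(u)/u$ on $(0,+\infty)$, whereas you run a contradiction on $m=\min_{1\leq i\leq N}(\beta_i/\eta_i)$ using the chord inequality $G_j(m\eta_j)>mG_j(\eta_j)$ — two equivalent formulations of strict concavity together with $G_j(0)=0$.
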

\begin{proof}
Using  \eqref{Khachatryan12}, as well as conditions  $1), 2), a), I)$ and $II)$ from \eqref{Khachatryan1} we have
\begin{equation}\label{Khachatryan19}
f_i(x)\geq \sum\limits_{j=1}^N a_{ij}G_j(\beta_j),\quad x\in\mathbb{R},\quad i=\overline{1,N},
\end{equation}
from which it follows that
\begin{equation}\label{Khachatryan20}
\beta_i\geq \sum\limits_{j=1}^N a_{ij}G_j(\beta_j),\quad i=\overline{1,N}.
\end{equation}
Taking into account  \eqref{Khachatryan2} from \eqref{Khachatryan20} we obtain
\begin{equation}\label{Khachatryan21}
\beta_i\geq \eta_i \min\limits_{1\leq j\leq N}\left(\frac{G_j(\beta_j)}{\eta_j}\right), \quad i=\overline{1,N}.
\end{equation}
Note  that there exists an index  $j^*\in \{1,2,\ldots, N\}$ such that
\begin{equation}\label{Khachatryan22}
\min\limits_{1\leq j\leq N}\left(\frac{G_j(\beta_j)}{\eta_j}\right)=\frac{G_{j^*}(\beta_{j^*})}{\eta_{j^*}}.
\end{equation}
In inequality  \eqref{Khachatryan21} taking  $i=j^*$ and considering  \eqref{Khachatryan22} we obtain that $\beta_{j^*}\geq G_{j^*}(\beta_{j^*}).$  Since  $\dfrac{G_{j^*}(u)}{u}$ monotonically decreases on  $(0,+\infty),$ then from the obtained inequality and the relation $G_{j^*}(\eta_{j^*})=\eta_{j^*}$ it follows that  $\beta_{j^*}\geq \eta_{j^*}.$  Since  $G_{j^*}(u)$ monotonically increases on the set  $[0,+\infty),$ then from the last inequality we obtain that  $G_{j^*}(\beta_{j^*})\geq G_{j^*}(\eta_{j^*})=\eta_{j^*}.$ Therefore, taking into account \eqref{Khachatryan22} from \eqref{Khachatryan21} we arrive at  \eqref{Khachatryan18}. The lemma is proved.
\end{proof}
\subsection{On some inclusions for arbitrary solutions of problem  \eqref{Khachatryan1}, \eqref{Khachatryan5}.}
The following Lemma holds
\begin{lemma}
Let conditions  $1), 2), a)$ and $I)-III)$ be satisfied. If $\alpha_i^+=0,$ $i=\overline{1,N},$ then an arbitrary solution  $f\in\Omega$ of problem  \eqref{Khachatryan1}, \eqref{Khachatryan5} is a summable function on $[0,+\infty)$.
\end{lemma}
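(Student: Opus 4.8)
The plan is to convert the pointwise identity \eqref{Khachatryan1} into an integral identity for $\int_0^\infty f_i$ by integrating in $x$ and interchanging the order of integration. Since every factor is nonnegative, Tonelli's theorem applies with no integrability hypothesis, and using the evenness of $K_{ij}$ from condition $1)$ to rewrite $\int_0^\infty K_{ij}(x-t)\,dx=\int_{-t}^\infty K_{ij}=\int_{-\infty}^t K_{ij}$, I obtain
\[
\int_0^\infty f_i(x)\,dx=\sum_{j=1}^N\int_{-\infty}^\infty \mu_j(t)\,G_j(f_j(t))\,\Phi_{ij}(t)\,dt,\qquad \Phi_{ij}(t):=\int_{-\infty}^t K_{ij}(s)\,ds .
\]
By Lemma~2.1 the integrand is controlled through $0\le f_j\le\xi_j$ and $0\le G_j(f_j)\le G_j(\xi_j)$, while $\Phi_{ij}$ increases from $0$ to $a_{ij}$, so the whole problem is reduced to showing that the right-hand side is finite.

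I would then split each $t$-integral at $t=0$. On $(-\infty,0]$ evenness gives $\Phi_{ij}(t)=\int_{|t|}^\infty K_{ij}$, and writing $\mu_j=1+(\mu_j-1)$ the contribution splits into a term bounded by $G_j(\xi_j)\int_{-\infty}^0\Phi_{ij}(t)\,dt=G_j(\xi_j)\int_0^\infty s\,K_{ij}(s)\,ds$ and a term bounded by $G_j(\xi_j)\,a_{ij}\int_{-\infty}^0(\mu_j(t)-1)\,dt$. Both are finite: the first by the first–moment condition in $2)$, the second by condition $b)$. This is exactly where the hypothesis $\int_0^\infty\tau K_{ij}(\tau)\,d\tau<\infty$ is indispensable — it makes the entire influx coming from the left half–line summable irrespective of the behaviour of $f$ at $-\infty$ (where $\alpha_j^-$ may be positive). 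After this, and since $\Phi_{ij}(t)\in[a_{ij}/2,a_{ij}]$ for $t\ge 0$ while $\mu_j-1\in L_1$, the finiteness of $\int_0^\infty f_i$ for all $i$ is \emph{equivalent} to the finiteness of $\int_0^\infty G_j(f_j(t))\,dt$ for all $j$; hence it suffices to bound either family of integrals.

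The main obstacle is precisely establishing $\int_0^\infty G_j(f_j(t))\,dt<\infty$. Here the boundary condition $\alpha_j^+=0$ enters: I would fix $T$ so large that $f_j(t)\le\eta_j$ for all $t\ge T$ and all $j$, the part over $[0,T]$ being trivially finite. On $[T,\infty)$ strict concavity together with $G_j(0)=0$ and $G_j(\eta_j)=\eta_j$ (conditions $II),III)$) yields $G_j(f_j)\ge f_j$, i.e. the comparison runs the ``wrong'' way: near $0$ the nonlinearity is supercritical, $G_j(u)/u\ge1$, so the operator obtained by linearizing \eqref{Khachatryan1} at $f=0$ has gain at least one and no naive geometric/contraction estimate closes. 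I expect to overcome this by using the equation itself rather than a crude majorant: I would write the tail relation as a perturbation of the conservative convolution, $f_i-\sum_j K_{ij}*f_j=\sum_j K_{ij}*(G_j(f_j)-f_j)$ with nonnegative right–hand side, and combine the vanishing of $f$ at $+\infty$ with the finite first moment (through the summable tail $R_{ij}(x)=\int_x^\infty K_{ij}$, which bounds the part of the convolution fed from the bulk) to force the required integral to converge. Reconciling the criticality of this linearized tail with the asserted summability is the delicate point; once it is settled, the displayed identity immediately gives $\int_0^\infty f_i(x)\,dx<\infty$ for every $i$, which is the claim.
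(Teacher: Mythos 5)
Your proposal contains a genuine gap, and you concede it yourself: the entire argument hinges on showing $\int_0^\infty G_j(f_j(t))\,dt<\infty$, and your final paragraph (``I expect to overcome this\dots the delicate point'') is a hope, not a proof. Your Tonelli identity over the whole half-line is circular: since $\Phi_{ij}(t)\ge a_{ij}/2$ for $t\ge0$, finiteness of $\int_0^\infty f_i$ and of $\int_0^\infty G_j(f_j)$ are indeed equivalent, but neither is established, and no perturbative analysis of the linearization at $f=0$ can close it, precisely because (as you note) the problem is critical: the spectral radius of $A$ is one and $G_j(u)\ge u$ near zero. (A minor additional slip: you invoke condition $b)$ to control $\int_{-\infty}^0(\mu_j-1)$, but $b)$ is not among the hypotheses of this lemma; the paper never needs it here, because it simply discards the \emph{nonnegative} term $\sum_j K_{ij}*\bigl((\mu_j-1)G_j(f_j)\bigr)$ using only $a)$.)

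The paper's mechanism, which is absent from your sketch, is a cancellation via the Perron eigenvector combined with the symmetry of the kernel. Subtracting the Perron relation $\eta_i=\sum_j a_{ij}\eta_j$ from the equation gives, for $x>r$ (where $f_i(x)<\eta_i/2$), the pointwise bound $0<\eta_i-f_i(x)\le\sum_j\int_{-\infty}^\infty K_{ij}(x-t)(\eta_j-G_j(f_j(t)))\,dt$. One integrates this over a \emph{finite} window $x\in(r,r_0)$, controls all contributions with $t$ outside $(r,r_0)$ by $3\sum_j\eta_j\int_0^\infty\tau K_{ij}(\tau)\,d\tau$ using the first-moment condition (this is where your left-half-line estimate reappears, correctly localized), then multiplies by $\eta_i$ and sums over $i$: since $K_{ij}=K_{ji}$ and $\sum_i\eta_i a_{ij}=\eta_j$, the critical double term is majorized by $\sum_j\eta_j\int_r^{r_0}(\eta_j-G_j(f_j))\,dx$, and after subtraction one is left with the uniform-in-$r_0$ bound $0\le\sum_i\eta_i\int_r^{r_0}(G_i(f_i(x))-f_i(x))\,dx\le 3\sum_{i,j}\eta_i\eta_j\int_0^\infty\tau K_{ij}(\tau)\,d\tau$. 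Then the very ``wrong-way'' inequality that blocked you becomes the decisive step: on $(r,\infty)$ one has $f_i<\eta_i/2$, so by the monotone decrease of $G_i(u)/u$, $G_i(f_i)-f_i\ge\bigl(\frac{2G_i(\eta_i/2)}{\eta_i}-1\bigr)f_i$ with a strictly positive constant (conditions $I)$--$III)$ give $G_i(\eta_i/2)>\eta_i/2$), whence $\int_r^{r_0}f_i\,dx$ is bounded uniformly in $r_0$, and $f_i\in L_1(0,+\infty)$ follows by letting $r_0\to+\infty$ and using continuity on $[0,r]$. In short: the supercritical gain $G_i-\mathrm{id}>0$ on the set where $f$ is small is not an obstacle to be linearized away but the quantity that the $\eta$-weighted, symmetry-based cancellation bounds, and it in turn dominates $f_i$ itself.
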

\begin{proof}
Since $\alpha_i^+=0,$ $i=\overline{1,N},$  then there exists a number  $r>0$ such that for $x>r$ the inequality holds
\begin{equation}\label{Khachatryan23}
f_i(x)<\frac{\eta_i}{2},\quad i=\overline{1,N}.
\end{equation}
First, we prove that  $f_i\in L_1(r,+\infty),$ $i=\overline{1,N}.$ Taking into account  \eqref{Khachatryan2}, conditions $1), 2), a),$ from \eqref{Khachatryan1} due to  \eqref{Khachatryan23} for all $x\in(r,+\infty)$ we have
$$0<\eta_i-f_i(x)=\sum\limits_{j=1}^N a_{ij}\eta_j-\sum\limits_{j=1}^N \int\limits_{-\infty}^\infty K_{ij}(x-t)\mu_j(t)G_j(f_j(t))dt=$$
$$= \sum\limits_{j=1}^N \int\limits_{-\infty}^\infty K_{ij}(x-t)(\eta_j-G_j(f_j(t)))dt- \sum\limits_{j=1}^N \int\limits_{-\infty}^\infty K_{ij}(x-t)(\mu_j(t)-1)G_j(f_j(t))dt\leq$$
$$\leq \sum\limits_{j=1}^N \int\limits_{-\infty}^\infty K_{ij}(x-t)(\eta_j-G_j(f_j(t)))dt,\quad x\in(r,+\infty),\quad i=\overline{1,N}.$$
 Let  $r_0\geq r$ be an arbitrary number. Integrating both parts of the inequality
\begin{equation}\label{Khachatryan24}
0<\eta_i-f_i(x)\leq \sum\limits_{j=1}^N \int\limits_{-\infty}^\infty K_{ij}(x-t)(\eta_j-G_j(f_j(t)))dt,\quad x>r,\quad  i=\overline{1,N}
\end{equation}
 over  $x$ in the interval from  $r$ to $r_0$ and using Lemma~2.1, conditions  $1), 2), I)$ and $II)$ we get
$$0<\int\limits_r^{r_0}(\eta_i-f_i(x))dx\leq \sum\limits_{j=1}^N \int\limits_r^{r_0} \int\limits_{-\infty}^\infty K_{ij}(x-t)(\eta_j-G_j(f_j(t)))dt dx=$$
$$=\sum\limits_{j=1}^N \int\limits_r^{r_0} \int\limits_{-\infty}^0 K_{ij}(x-t)(\eta_j-G_j(f_j(t)))dt dx+\sum\limits_{j=1}^N \int\limits_r^{r_0} \int\limits_{0}^{r_0} K_{ij}(x-t)(\eta_j-G_j(f_j(t)))dt dx+$$
$$+ \sum\limits_{j=1}^N \int\limits_r^{r_0} \int\limits_{r_0}^\infty K_{ij}(x-t)(\eta_j-G_j(f_j(t)))dt dx\leq \sum\limits_{j=1}^N \eta_j \int\limits_0^{r_0} \int\limits_{-\infty}^0 K_{ij}(x-t)dtdx+$$
$$+ \sum\limits_{j=1}^N \eta_j \int\limits_0^{r_0} \int\limits_{r_0}^\infty K_{ij}(x-t)dtdx+ \sum\limits_{j=1}^N \eta_j \int\limits_r^{r_0} \int\limits_0^r K_{ij}(x-t)dtdx+$$
$$+\sum\limits_{j=1}^N \int\limits_r^{r_0} \int\limits_{r}^{r_0} K_{ij}(x-t)(\eta_j-G_j(f_j(t)))dt dx= \sum\limits_{j=1}^N \eta_j \int\limits_0^{r_0} \int\limits_{x}^\infty K_{ij}(\tau)d\tau dx+$$
$$+ \sum\limits_{j=1}^N \eta_j \int\limits_0^{r_0} \int\limits_{r_0-x}^\infty K_{ij}(\tau)d\tau dx+
+ \sum\limits_{j=1}^N \eta_j \int\limits_r^{r_0} \int\limits_{x-r}^x K_{ij}(\tau)d\tau dx+$$
$$+\sum\limits_{j=1}^N \int\limits_r^{r_0} \int\limits_{r}^{r_0} K_{ij}(x-t)(\eta_j-G_j(f_j(t)))dt dx:=I_1^i+I_2^i+I_3^i+I_4^i,\quad i=\overline{1,N}.$$
Let us separately estimate the terms  $I_1^i, I_2^i$ and $I_3^i.$  Taking into account conditions  $1), 2)$ and considering Fubini’s theorem  (see \cite{kol25}) we have
$$ I_1^i:=\sum\limits_{j=1}^N \eta_j \int\limits_0^{r_0} \int\limits_{x}^\infty K_{ij}(\tau)d\tau dx= \sum\limits_{j=1}^N \eta_j \int\limits_0^{r_0} \int\limits_{x}^{r_0} K_{ij}(\tau)d\tau dx+$$
$$+ \sum\limits_{j=1}^N \eta_j \int\limits_0^{r_0} \int\limits_{r_0}^\infty K_{ij}(\tau)d\tau dx= \sum\limits_{j=1}^N \eta_j \int\limits_0^{r_0} K_{ij}(\tau)\tau d\tau+\sum\limits_{j=1}^N \eta_j \int\limits_{r_0}^\infty K_{ij}(\tau)r_0 d\tau\leq $$
$$\leq \sum\limits_{j=1}^N \eta_j \int\limits_0^{r_0} K_{ij}(\tau)\tau d\tau+ \sum\limits_{j=1}^N \eta_j \int\limits_{r_0}^\infty K_{ij}(\tau)\tau d\tau= \sum\limits_{j=1}^N \eta_j \int\limits_0^{\infty} K_{ij}(\tau)\tau d\tau<+\infty,$$
$$I_2^i:= \sum\limits_{j=1}^N \eta_j \int\limits_0^{r_0} \int\limits_{r_0-x}^\infty K_{ij}(\tau)d\tau dx= \sum\limits_{j=1}^N \eta_j \int\limits_0^{r_0} \int\limits_{t}^\infty K_{ij}(\tau)d\tau dt\leq \sum\limits_{j=1}^N \eta_j \int\limits_0^{\infty}  K_{ij}(\tau)\tau d\tau<+\infty,$$
$$I_3^i:= \sum\limits_{j=1}^N \eta_j \int\limits_r^{r_0} \int\limits_{x-r}^x K_{ij}(\tau)d\tau dx\leq \sum\limits_{j=1}^N \eta_j \int\limits_r^{r_0} \int\limits_{x-r}^\infty K_{ij}(\tau)d\tau dx=$$
$$=\sum\limits_{j=1}^N \eta_j \int\limits_0^{r_0-r} \int\limits_{t}^\infty K_{ij}(\tau)d\tau dxdt\leq \sum\limits_{j=1}^N \eta_j \int\limits_{0}^\infty K_{ij}(\tau)\tau d\tau<+\infty.$$
Therefore, considering the above obtained inequality
$$0<\int\limits_r^{r_0} (\eta_i-f_i(x))dx\leq I_1^i+I_2^i+I_3^i+I_4^i,\quad i=\overline{1,N},$$
we arrive at the estimates
\begin{equation}\label{Khachatryan25}
\begin{array}{c}
\displaystyle 0< \int\limits_r^{r_0} (\eta_i-f_i(x))dx\leq 3 \sum\limits_{j=1}^N \eta_j \int\limits_0^{\infty}K_{ij}(\tau)\tau d\tau+ \\
\displaystyle+ \sum\limits_{j=1}^N \int\limits_r^{r_0} \int\limits_{r}^{r_0} K_{ij}(x-t)(\eta_j-G_j(f_j(t)))dt dx, \quad i=\overline{1,N}.
\end{array}
\end{equation}
We multiply both parts of  \eqref{Khachatryan25} by $\eta_i$ and then sum the obtained inequalities over all $i=1,\ldots,N.$ As a result, using relation \eqref{Khachatryan2} and the symmetry of the matrix  $A$ we arrive at the inequality
$$\sum\limits_{i=1}^N\eta_i\int\limits_r^{r_0} (\eta_i-f_i(x))dx\leq 3 \sum\limits_{i=1}^N\eta_i\sum\limits_{j=1}^N \eta_j \int\limits_0^{\infty}K_{ij}(\tau)\tau d\tau+\sum\limits_{j=1}^N  \eta_j\int\limits_r^{r_0} (\eta_j-G_j(f_j(x)))dx,$$
from which we obtain that
\begin{equation}\label{Khachatryan26}
0\leq \sum\limits_{i=1}^N \eta_i\int\limits_r^{r_0} (G_i(f_i(x))-f_i(x))dx\leq 3 \sum\limits_{i=1}^N\eta_i\sum\limits_{j=1}^N \eta_j \int\limits_0^{\infty}K_{ij}(\tau)\tau d\tau.
\end{equation}
Since \eqref{Khachatryan23} holds for all $x\in(r,+\infty),$ then due to the monotonicity of  $\dfrac{G_i(u)}{u}$ on $(0,+\infty),$ $i=\overline{1,N}$  we can assert that
\begin{equation}\label{Khachatryan27}
G_i(f_i(x))\geq \frac{G_i(\frac{\eta_i}{2})}{\frac{\eta_i}{2}}f_i(x),\quad x\in(r,+\infty),\quad i=\overline{1,N}.
\end{equation}
From \eqref{Khachatryan26} and \eqref{Khachatryan27} we obtain that
\begin{equation}\label{Khachatryan28}
2\sum\limits_{i=1}^N \left(G_i\left(\frac{\eta_i}{2}\right)-\frac{\eta_i}{2}\right)\int\limits_r^{r_0} f_i(x)dx\leq 3 \sum\limits_{i=1}^N\eta_i\sum\limits_{j=1}^N \eta_j \int\limits_0^{\infty}K_{ij}(\tau)\tau d\tau.
\end{equation}
Since $G_i\left(\dfrac{\eta_i}{2}\right)-\dfrac{\eta_i}{2}>0,$ $i=\overline{1,N}$ (see conditions  $I)-III)$), then from  \eqref{Khachatryan28}, in particular, it follows that
\begin{equation}\label{Khachatryan29}
0\leq \int\limits_r^{r_0} f_i(x)dx\leq \sum\limits_{i=1}^N\int\limits_r^{r_0} f_i(x)dx\leq \dfrac{3}{2} \frac{\sum\limits_{i=1}^N\eta_i\sum\limits_{j=1}^N \eta_j \int\limits_0^{\infty}K_{ij}(\tau)\tau d\tau}{\min\limits_{1\leq i\leq N} \left(G_i\left(\frac{\eta_i}{2}\right)-\frac{\eta_i}{2}\right)}.
\end{equation}
In \eqref{Khachatryan29} by letting  $r_0\rightarrow+\infty$ we obtain that  $f_i\in(r,+\infty),$ $i=\overline{1,N}$ and
\begin{equation}\label{Khachatryan30}
0\leq \int\limits_r^{\infty} f_i(x)dx\leq \sum\limits_{i=1}^N\int\limits_r^{\infty} f_i(x)dx\leq \dfrac{3}{2} \frac{\sum\limits_{i=1}^N\eta_i\sum\limits_{j=1}^N \eta_j \int\limits_0^{\infty}K_{ij}(\tau)\tau d\tau}{\min\limits_{1\leq i\leq N} \left(G_i\left(\frac{\eta_i}{2}\right)-\frac{\eta_i}{2}\right)}.
\end{equation}
Since $f_i\in C(\mathbb{R}),$ $i=\overline{1,N}$ (see \eqref{Khachatryan15}), then by virtue of \eqref{Khachatryan30} we conclude that  $f_i\in L_1(0,+\infty),$ $i=\overline{1,N}.$ Thus, the lemma is proved.
\end{proof}
By making similar arguments, we can also prove the validity of the following lemma:
\begin{lemma}
  Let conditions  $1), 2), a)$ and $I)-III)$ be satisfied. Then, if  $\alpha_i^-=0,$ $i=\overline{1,N},$ then an arbitrary solution  $f\in\Omega$ of problem  \eqref{Khachatryan1}, \eqref{Khachatryan5} is a summable function on the set  $(-\infty,0].$
\end{lemma}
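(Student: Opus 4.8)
The plan is to mirror the proof of Lemma~2.4 with the roles of $+\infty$ and $-\infty$ interchanged, exploiting the evenness of each $K_{ij}$ (condition $1)$) which makes the two half-line arguments structurally symmetric. Since $\alpha_i^-=0$ for all $i$, there exists $r>0$ so that $f_i(x)<\eta_i/2$ for all $x<-r$ and all $i=\overline{1,N}$; this is the exact analogue of \eqref{Khachatryan23} on the left half-line. Using \eqref{Khachatryan2} together with conditions $1),2),a)$, I would derive from \eqref{Khachatryan1} the inequality
\begin{equation*}
0<\eta_i-f_i(x)\leq\sum\limits_{j=1}^N\int\limits_{-\infty}^\infty K_{ij}(x-t)(\eta_j-G_j(f_j(t)))dt,\quad x<-r,\quad i=\overline{1,N},
\end{equation*}
precisely as in \eqref{Khachatryan24}, discarding the nonnegative singular term involving $\mu_j(t)-1$.

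Next I would fix an arbitrary $r_0\geq r$ and integrate this inequality over $x$ in the interval $(-r_0,-r)$. Splitting the inner $t$-integral into the regions $t\in(-\infty,-r_0)$, $t\in(-r_0,0)$, $t\in(0,+\infty)$ and $t\in(-r_0,-r)$ and changing variables $\tau=x-t$, I expect to obtain three boundary terms $I_1^i,I_2^i,I_3^i$ bounded by $\sum_j\eta_j\int_0^\infty K_{ij}(\tau)\tau\,d\tau$ exactly as before. Here the evenness $K_{ij}(-\tau)=K_{ij}(\tau)$ is what guarantees that the estimates involving $\int_0^\infty\tau K_{ij}(\tau)\,d\tau$ (finite by condition $2)$) carry over verbatim despite the reversal of orientation. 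I would then reach the analogue of \eqref{Khachatryan25} and, after multiplying by $\eta_i$, summing over $i$, and using \eqref{Khachatryan2} with the symmetry of $A$, the analogue of \eqref{Khachatryan26}.

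To convert this into an $L_1$-bound on $f_i$ itself, I would invoke the monotonicity of $G_i(u)/u$ on $(0,+\infty)$ to get $G_i(f_i(x))\geq\frac{G_i(\eta_i/2)}{\eta_i/2}f_i(x)$ for $x<-r$, the analogue of \eqref{Khachatryan27}, and combine it with the analogue of \eqref{Khachatryan26} to produce the analogue of \eqref{Khachatryan28}. Since $G_i(\eta_i/2)-\eta_i/2>0$ by conditions $I)$--$III)$, dividing through yields a uniform bound on $\int_{-r_0}^{-r}f_i(x)\,dx$ independent of $r_0$. Letting $r_0\to+\infty$ and using continuity of $f_i$ on $\mathbb{R}$ (established in \eqref{Khachatryan15}), which guarantees integrability on the compact piece $[-r,0]$, gives $f_i\in L_1(-\infty,0]$.

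I do not anticipate a genuine obstacle, as the argument is a reflection of Lemma~2.4; the only point requiring care is the bookkeeping in the change of variables $\tau=x-t$ over the left half-line, where one must check that the orientation reversal is absorbed by the evenness of $K_{ij}$ so that the same positive quantities $\int_0^\infty\tau K_{ij}(\tau)\,d\tau$ appear. This is exactly the step the authors summarize as ``by making similar arguments,'' and it is routine once the symmetry of the kernel is used.
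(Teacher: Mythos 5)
Your proposal is correct and is exactly the argument the paper intends: the paper proves Lemma~2.5 only by the remark that one repeats the proof of Lemma~2.4, and you carry out precisely that mirroring --- the left-half-line analogue of \eqref{Khachatryan23}, dropping the nonnegative $(\mu_j-1)$ term to get the analogue of \eqref{Khachatryan24}, integrating over $(-r_0,-r)$ with the same three boundary estimates via $\int_0^\infty\tau K_{ij}(\tau)\,d\tau$ (where evenness of $K_{ij}$ absorbs the orientation reversal), then \eqref{Khachatryan2} with symmetry of $A$, the chord estimate $G_i(\eta_i/2)>\eta_i/2$, and continuity \eqref{Khachatryan15} on the compact piece $[-r,0]$. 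No gap; this matches the paper's route in every essential step.
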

The following Lemma  also holds:
\begin{lemma}
Let conditions  $1), 2), a), b), I)-III)$ are fulfilled and there exists an index $j_0\in\{1,2,\ldots,N\}$ such that $\alpha_{j_0}^\pm>0.$ Then $f_i-\eta_i\in L_1^0(\mathbb{R}),$ $i=\overline{1,N}$  for any solution  $f\in\Omega$ of problem  \eqref{Khachatryan1}, \eqref{Khachatryan5}.
\end{lemma}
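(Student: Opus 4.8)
The plan is to combine the two-sided pointwise bounds already at hand with a symmetrization argument built around the Perron vector $\eta$. Since by hypothesis some $\alpha_{j_0}^\pm>0$, Lemma~2.3 gives $f_i(x)\ge\beta_i\ge\eta_i$, while Lemma~2.1 gives $f_i(x)\le\xi_i$; hence $\eta_i\le f_i(x)\le\xi_i$ for all $x$ and all $i$, and in particular $\alpha_i^\pm\ge\eta_i>0$ for every $i$. I set $v_i:=f_i-\eta_i\ge0$. Using \eqref{Khachatryan2} and $a_{ij}=\int K_{ij}$, I rewrite \eqref{Khachatryan1} in the difference form
$$v_i(x)=\sum_{j=1}^N\int_{-\infty}^\infty K_{ij}(x-t)\bigl(G_j(f_j(t))-\eta_j\bigr)\,dt+g_i(x),$$
where $g_i(x):=\sum_{j=1}^N\int_{-\infty}^\infty K_{ij}(x-t)(\mu_j(t)-1)G_j(f_j(t))\,dt\ge0$. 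Because $\mu_j-1\in L_1(\mathbb R)$ by b) and $G_j(f_j)$ is bounded by $G_j(\xi_j)$, each $(\mu_j-1)G_j(f_j)$ lies in $L_1(\mathbb R)$; convolving with $K_{ij}\in L_1(\mathbb R)$ shows $g_i\in L_1(\mathbb R)$, with $\|g_i\|_{L_1}$ controlled by $\|K_{ij}\|_{L_1}$ and $\|\mu_j-1\|_{L_1}$.

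The key analytic input is a one-sided coercivity estimate for the nonlinearity. From $G_j(0)=0$, $G_j(\eta_j)=\eta_j$ and strict concavity, the chord slopes of $G_j$ strictly decrease; this yields $G_j(s)>s$ for $0<s<\eta_j$ and $G_j(u)<u$ for $u>\eta_j$, and more quantitatively $u-G_j(u)\ge c_j(u-\eta_j)$ for all $u\in[\eta_j,\xi_j]$, where $c_j:=1-G_{j,+}'(\eta_j)$. Strict positivity of $c_j$ is exactly where strict concavity is essential: for $0<s<\eta_j$ the relation $G_j(s)>s$ forces the chord slope $\bigl(G_j(\eta_j)-G_j(s)\bigr)/(\eta_j-s)<1$, and since this slope dominates the right derivative $G_{j,+}'(\eta_j)$ we get $c_j>0$; convexity of $u\mapsto u-G_j(u)$ then makes the ratio $\bigl(u-G_j(u)\bigr)/(u-\eta_j)$ nondecreasing, so it stays $\ge c_j$ on $(\eta_j,\xi_j]$. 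In particular $f_i(x)-G_i(f_i(x))\ge c_i\,v_i(x)$ pointwise.

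For summability I integrate the difference identity over $x\in[-r_0,r_0]$, as in the proof of Lemma~2.4. Writing $Q_i^{(r_0)}:=\int_{-r_0}^{r_0}v_i\,dx$ and $P_j^{(r_0)}:=\int_{-r_0}^{r_0}\bigl(G_j(f_j)-\eta_j\bigr)dx$, I split the inner $t$-integral at $\pm r_0$: the part over $[-r_0,r_0]$ is bounded by $\sum_j a_{ij}P_j^{(r_0)}$ (extending the $x$-integration to all of $\mathbb R$, which is legitimate since the integrand is nonnegative and $\int_{-\infty}^\infty K_{ij}(x-t)\,dx=a_{ij}$), while the tails $|t|>r_0$ are bounded, using $0\le G_j(f_j)-\eta_j\le\xi_j$ and the finite first moments $\int_0^\infty\tau K_{ij}(\tau)\,d\tau<+\infty$ from condition~2), by a constant $M_i$ independent of $r_0$ (these are precisely the estimates producing $I_1^i,I_2^i,I_3^i$ in Lemma~2.4). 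Hence $Q_i^{(r_0)}\le\sum_j a_{ij}P_j^{(r_0)}+M_i+\|g_i\|_{L_1}$. I then multiply by $\eta_i$, sum over $i$, and use the symmetry $a_{ij}=a_{ji}$ together with \eqref{Khachatryan2} in the form $\sum_i\eta_i a_{ij}=\eta_j$; this cancels the Perron-weighted $P$-terms and leaves
$$\sum_{i=1}^N\eta_i\bigl(Q_i^{(r_0)}-P_i^{(r_0)}\bigr)\le\sum_{i=1}^N\eta_i\bigl(M_i+\|g_i\|_{L_1}\bigr).$$
Since $Q_i^{(r_0)}-P_i^{(r_0)}=\int_{-r_0}^{r_0}\bigl(f_i-G_i(f_i)\bigr)dx\ge c_i Q_i^{(r_0)}$ by the coercivity estimate, I get $\sum_i\eta_i c_i Q_i^{(r_0)}\le C$ with $C$ independent of $r_0$; letting $r_0\to+\infty$ yields $v_i\in L_1(\mathbb R)$ for every $i$.

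Finally, since $f$ solves \eqref{Khachatryan1}, \eqref{Khachatryan5}, the limits $\lim_{x\to\pm\infty}v_i(x)=\alpha_i^\pm-\eta_i$ exist; an integrable function that possesses a limit at $\pm\infty$ must have that limit equal to zero, so $\alpha_i^\pm=\eta_i$ and $v_i(x)\to0$ as $x\to\pm\infty$. Together with $v_i\in L_1(\mathbb R)$ this gives $f_i-\eta_i\in L_1^0(\mathbb R)$ for all $i$, as claimed. I expect the main obstacle to be the degeneracy of $A$ (spectral radius $1$, so $I-A$ is not invertible): naive integration of the equation only produces a vector inequality of the type $Q\le AQ+R$, which is uninformative on its own. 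The resolution is the symmetrization by the Perron vector $\eta$ in the third paragraph, which collapses it to a single scalar inequality, used in tandem with the strict-concavity constant $c_j>0$ that is precisely what lets me extract $\int v_i$ rather than merely $\int\bigl(f_i-G_i(f_i)\bigr)$.
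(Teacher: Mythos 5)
Your proof is correct, and its core is the same as the paper's: the paper's own proof of this lemma is deliberately terse (``making similar arguments as in the proof of Lemma~2.4''), and what you wrote out in full is exactly that argument --- integrate the difference identity over a growing window, bound the tail contributions by constants independent of the window using the first moments $\int_0^\infty\tau K_{ij}(\tau)\,d\tau<+\infty$, multiply by the Perron vector $\eta$ and use $a_{ij}=a_{ji}$ together with \eqref{Khachatryan2} to collapse the degenerate vector inequality to a scalar one, then convert $\int(f_i-G_i(f_i))\,dx\leq C$ into $\int(f_i-\eta_i)\,dx<+\infty$ via a strict-concavity slope bound. You differ from the paper in two local choices. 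First, your coercivity constant is $c_i=1-G_{i,+}'(\eta_i)$, justified through one-sided derivatives of concave functions, whereas the paper uses the purely chordal constant $\bigl(G_i(\eta_i/2)-\eta_i/2\bigr)/(\eta_i/2)$ read off from the secant through $(\eta_i/2,G_i(\eta_i/2))$ and $(\eta_i,\eta_i)$; both are strictly positive by strict concavity and either works, the paper's version merely avoiding any appeal to differentiability. Second, and more substantively, for the zero-limit part you invoke the boundary condition \eqref{Khachatryan5} directly: the limits $\alpha_i^{\pm}$ exist by hypothesis, and an $L_1$ function with a limit at $\pm\infty$ has limit zero, so $\alpha_i^{\pm}=\eta_i$. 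This is legitimate and more elementary for the lemma as stated. The paper instead proves $f_i(x)\rightarrow\eta_i$ from scratch via the convolution limit facts $q_1)$, $q_2)$, using $0\leq G_j(f_j)-\eta_j\leq f_j-\eta_j\in L_1(\mathbb{R})\cap L_\infty(\mathbb{R})$ and $\mu_j-1\in L_1^0(\mathbb{R})$; the payoff of that extra work is reusability --- in Theorem~3.1 the same computation is applied to the solution built by the iterations \eqref{Khachatryan37}, which is not a priori known to have limits at $\pm\infty$, so your shortcut would not transfer there, while the paper's convolution argument does. Within the scope of this lemma, however, your proof is complete and sound.
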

\begin{proof}
By making similar arguments as in the proof of Lemma~2.4 and taking into account the assertion of Lemma~2.3, we can verify that there exists  $0<C=const$ such that
\begin{equation}\label{Khachatryan31}
0\leq \int\limits_{-\infty}^\infty (f_i(x)-G_i(f_i(x)))dx\leq C<+\infty,\quad i=\overline{1,N}.
\end{equation}
On the other hand, from conditions  $I)-III)$ and inequalities  \eqref{Khachatryan6} and \eqref{Khachatryan18} we have  (see Fig. 1)
$$0\leq G_i(f_i(x))-\eta_i\leq \frac{\eta_i-G_i\left(\frac{\eta_i}{2}\right)}{\frac{\eta_i}{2}}(f_i(x)-\eta_i),\quad x\in\mathbb{R},\quad i=\overline{1,N},$$
whence it follows that
\begin{equation}\label{Khachatryan32}
f_i(x)-G_i(f_i(x))\geq (f_i(x)-\eta_i)\left(\frac{G_i\left(\frac{\eta_i}{2}\right)-\frac{\eta_i}{2}}{\frac{\eta_i}{2}}\right),\quad x\in\mathbb{R},\quad i=\overline{1,N}.
\end{equation}
Since $G_i\left(\frac{\eta_i}{2}\right)>\frac{\eta_i}{2}>0,$ $i=\overline{1,N},$ then from  \eqref{Khachatryan31}, \eqref{Khachatryan32} and \eqref{Khachatryan18} we arrive at the inclusions $f_i-\eta_i\in L_1(\mathbb{R}),$ $i=\overline{1,N},$
and it is easy to prove that for
\begin{equation}\label{Khachatryan33}
\mbox{ }\quad \alpha_{j_0}^+>0,\quad f_i-\eta_i\in L_1(0,+\infty), \quad \mbox{and for }\quad \alpha_{j_0}^->0,\quad f_i-\eta_i\in L_1(-\infty,0),
\end{equation}

\begin{figure}[!h]
  \centering
  % Requires \usepackage{graphicx}
\includegraphics[width=5in]{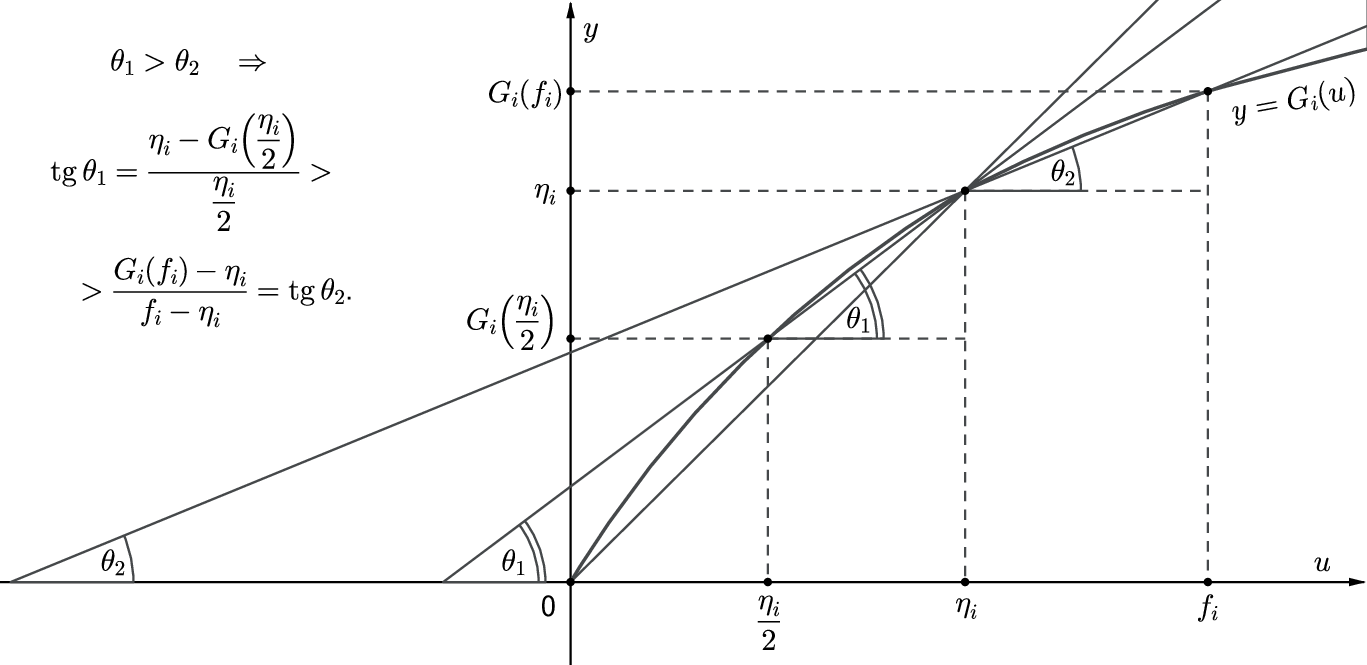}\\
  \caption{Intersection of the graph of function $y=G_i(u)$ with a line passing through points $(\eta_i, \eta_i)$ and $(\frac{\eta_i}{2},G_i(\frac{\eta_i}{2})).$}
\end{figure}

To complete the proof of the formulated lemma, it remains to check the validity of the limit relations: $\lim\limits_{x\rightarrow\pm\infty}f_i(x)=\eta_i,$ $i=\overline{1,N}.$  For this purpose, below we will use the following limit relations in the convolution operation  (see \cite{arb26} and \cite{eng27}):
\begin{enumerate}
  \item [$q_1)$] if $F_j\in L_1(\mathbb{R})\cap L_\infty (\mathbb{R}),$ $j=1,2,$ then $$(F_1*F_2)(x)=\int\limits_{-\infty}^\infty F_1(x-t)F_2(t)dt\rightarrow0, \quad \mbox{as}\quad x\rightarrow\pm\infty,$$
  \item [$q_2)$] if $F_1\in L_1(\mathbb{R})\cap L_\infty (\mathbb{R}),$ $F_2\in L_1^0(\mathbb{R}),$ then $(F_1*F_2)(x)\rightarrow0,$ as $x\rightarrow\pm\infty.$
\end{enumerate}
Using $q_1), q_2),$ \eqref{Khachatryan2}, \eqref{Khachatryan6}, as well as conditions  $1), 2), a), b), I)-III)$ from \eqref{Khachatryan1} we have
$$0\leq f_i(x)-\eta_i=\sum\limits_{j=1}^N \int\limits_{-\infty}^\infty K_{ij}(x-t)\mu_j(t)G_j(f_j(t))dt-\sum\limits_{j=1}^N\eta_j \int\limits_{-\infty}^\infty K_{ij}(x-t)dt\leq$$
$$\leq \sum\limits_{j=1}^N G_j(\xi_j) \int\limits_{-\infty}^\infty K_{ij}(x-t)(\mu_j(t)-1)dt+ \sum\limits_{j=1}^N \int\limits_{-\infty}^\infty K_{ij}(x-t)(G_j(f_j(t))-\eta_j)dt\leq$$
$$\leq \sum\limits_{j=1}^N G_j(\xi_j) \int\limits_{-\infty}^\infty K_{ij}(x-t)(\mu_j(t)-1)dt+ \sum\limits_{j=1}^N \int\limits_{-\infty}^\infty K_{ij}(x-t)(f_j(t)-\eta_j)dt\rightarrow0,$$
 when $x\rightarrow\pm\infty,\, i=\overline{1,N},$ which implies that $\lim\limits_{x\rightarrow\pm\infty}f_i(x)=\eta_i,\, i=\overline{1,N}.$ The lemma is proven.
\end{proof}
The following Lemma  is useful
\begin{lemma}
Let conditions  $1), 2), a), b), I)-III),$ $\alpha_i^-=0$ and $\alpha_i^+>0,$ $i=\overline{1,N}$ be satisfied. Then any monotonically non-decreasing solution  $f\in\Omega$ of problem  \eqref{Khachatryan1}, \eqref{Khachatryan5} satisfies the inequality:
\begin{equation}\label{Khachatryan34}
f_i(x)\leq \eta_i,\quad x\in\mathbb{R},\quad i=\overline{1,N}.
\end{equation}
Moreover, $\lim\limits_{x\rightarrow+\infty}f_i(x)=\eta_i,\, i=\overline{1,N}.$
\end{lemma}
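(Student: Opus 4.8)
The plan is to pin down the limit vector $\alpha^+=(\alpha_1^+,\ldots,\alpha_N^+)^T$ exactly and show it equals $\eta$; both the pointwise bound \eqref{Khachatryan34} and the limit relation then follow from monotonicity. Since $f$ is non-decreasing and, by Lemma~2.1, bounded above by $\xi$, each $f_i$ possesses limits at $\pm\infty$; by \eqref{Khachatryan5} these are $\alpha_i^-=0$ and $\alpha_i^+>0$, and in particular $\sup_{x}f_i(x)=\alpha_i^+$.

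First I would pass to the limit $x\to+\infty$ in \eqref{Khachatryan1}. To do this cleanly I split the right-hand side as
\[
f_i(x)=\sum_{j=1}^N\int_{-\infty}^\infty K_{ij}(x-t)(\mu_j(t)-1)G_j(f_j(t))\,dt+\sum_{j=1}^N\int_{-\infty}^\infty K_{ij}(x-t)G_j(f_j(t))\,dt.
\]
For the first sum, $G_j(f_j)$ is bounded (Lemma~2.1 together with continuity of $G_j$) while $\mu_j-1\in L_1^0(\mathbb{R})$ by condition $b)$, so the product $(\mu_j-1)G_j(f_j)\in L_1^0(\mathbb{R})$; by property $q_2)$ of the convolution each term tends to $0$ as $x\to\pm\infty$. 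For the second sum I would use $G_j(f_j(t))\to G_j(\alpha_j^+)$ as $t\to+\infty$ (continuity of $G_j$ and monotonicity of $f_j$), rewrite the convolution as $\int_{-\infty}^\infty K_{ij}(s)G_j(f_j(x-s))\,ds$ via the symmetry in $1)$, and apply dominated convergence with majorant $\|G_j(f_j)\|_\infty K_{ij}(\cdot)\in L_1(\mathbb{R})$ to get the limit $a_{ij}G_j(\alpha_j^+)$. Combining the two gives the key relation $\alpha_i^+=\sum_{j=1}^N a_{ij}G_j(\alpha_j^+)$, $i=\overline{1,N}$.

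With this relation in hand the rest is purely algebraic and mirrors Lemmas~2.1 and 2.3. Writing $\alpha_i^+=\sum_j a_{ij}G_j(\eta_j)\,\frac{G_j(\alpha_j^+)}{G_j(\eta_j)}$ and using $\sum_j a_{ij}\eta_j=\eta_i$ with $G_j(\eta_j)=\eta_j$, I would bound the ratio by its maximum $m=\max_j\frac{G_j(\alpha_j^+)}{G_j(\eta_j)}$ to obtain $\alpha_i^+\leq m\,\eta_i$; evaluating at an index $j_0$ realizing the maximum and invoking the monotone decrease of $G_{j_0}(u)/u$ on $(0,+\infty)$ (a consequence of $I)$--$III)$) gives $\alpha_{j_0}^+\leq\eta_{j_0}$, hence $m\leq1$ and $\alpha_i^+\leq\eta_i$ for all $i$. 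This is exactly the computation of Lemma~2.1 with $b_{ij}=0$ and $\xi$ replaced by $\eta$. Since $f_i$ is non-decreasing, $f_i(x)\leq\alpha_i^+\leq\eta_i$, which is \eqref{Khachatryan34}.

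Finally, for the limit relation I would run the same comparison from below, replacing the maximum by $m'=\min_j\frac{G_j(\alpha_j^+)}{\eta_j}$ to get $\alpha_i^+\geq m'\eta_i$, and evaluating at the minimizing index together with the monotonicity of $G(u)/u$ to conclude $\alpha_i^+\geq\eta_i$ (here the hypothesis $\alpha_i^+>0$ is used so that $G_j(u)/u$ is applied at $\alpha_j^+\in(0,+\infty)$); this is the argument already used in the proof of Lemma~2.3. Hence $\alpha_i^+=\eta_i$, so $\lim_{x\to+\infty}f_i(x)=\eta_i$, $i=\overline{1,N}$. I expect the only genuinely analytic step to be the passage to the limit in the integral equation --- specifically the separate handling of the singular factor $\mu_j-1$ through $q_2)$ and of the bounded factor $G_j(f_j)$ through dominated convergence; everything afterwards reduces to the concavity/monotonicity bookkeeping already developed for the a priori estimates.
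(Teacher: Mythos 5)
Your proposal is correct and follows the same skeleton as the paper's proof of Lemma~2.7: you use the identical decomposition of the right-hand side into the singular part $(\mu_j-1)G_j(f_j)$ and the regular convolution part (this is exactly the paper's rewriting \eqref{Khachatryan35}), pass to the limit $x\rightarrow+\infty$ to obtain the algebraic system $\alpha_i^+=\sum_{j=1}^N a_{ij}G_j(\alpha_j^+)$ (the paper's \eqref{Khachatryan36}), identify its solution as $\eta$, and conclude \eqref{Khachatryan34} from monotonicity and continuity. The two genuine (if minor) deviations are both replacements of citations by inline arguments. First, where the paper invokes the convolution limit fact $q_3)$ from \cite{eng27} for the regular part, you prove it directly by dominated convergence with majorant $G_j(\xi_j)K_{ij}(\cdot)\in L_1(\mathbb{R})$, using the symmetry of $K_{ij}$ and the pointwise convergence $G_j(f_j(x-s))\rightarrow G_j(\alpha_j^+)$; this is correct, and your handling of the singular part via $q_2)$ (noting that an $L_1^0$ function times a bounded function stays in $L_1^0$) matches what the paper does implicitly. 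Second, where the paper cites \cite{khach28} for uniqueness of the solution of \eqref{Khachatryan36} in the class $T$ of nonnegative vectors with at least one positive coordinate, you give a self-contained max-ratio/min-ratio argument in the style of Lemmas~2.1 and~2.3, exploiting $\sum_j a_{ij}\eta_j=\eta_i$, $G_j(\eta_j)=\eta_j$, and the monotone decrease of $G_j(u)/u$ on $(0,+\infty)$. Note that your argument is slightly less general than the cited result --- your min-ratio step applies $G_{j}(u)/u$ at $u=\alpha_{j}^+$ and so needs every coordinate $\alpha_i^+$ to be positive --- but this is exactly what the hypothesis of the lemma supplies ($\alpha_i^+>0$ for all $i=\overline{1,N}$), so the proof is complete as written, and it buys you independence from the external reference at no real cost.
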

\begin{proof}
Using the following limit relation in the convolution operation (see \cite{eng27}):
\begin{enumerate}
  \item [$q_3)$] if $F_1\in L_1(\mathbb{R})\cap L_\infty (\mathbb{R}),$ $0\leq F_2\in L_\infty(\mathbb{R})$ and there exists  $\lim\limits_{x\rightarrow\pm\infty}F_2(x)<+\infty,$ then  $$(F_1*F_2)(x)\rightarrow\int\limits_{-\infty}^\infty F_1(x)dx \cdot\lim\limits_{x\rightarrow\pm\infty}F_2(x), \quad \mbox{when}\quad x\rightarrow\pm\infty,$$  while writing system  \eqref{Khachatryan1} as follows form
\end{enumerate}
\begin{equation}\label{Khachatryan35}
\begin{array}{c}
\displaystyle f_i(x)=\sum\limits_{j=1}^N \int\limits_{-\infty}^\infty K_{ij}(x-t)(\mu_j(t)-1)G_j(f_j(t))dt+ \\
\displaystyle+\sum\limits_{j=1}^N \int\limits_{-\infty}^\infty K_{ij}(x-t)G_j(f_j(t))dt,\quad x\in\mathbb{R},\quad i=\overline{1,N}
\end{array}
\end{equation}
and taking into account conditions  $1), 2), a), b), I), II)$ from \eqref{Khachatryan35} for the limiting values $\alpha_i^+,$ $i=\overline{1,N}$ we obtain the following system of nonlinear algebraic equations
\begin{equation}\label{Khachatryan36}
\alpha_i^+=\sum\limits_{j=1}^N  a_{ij}G_j(\alpha_j^+),\quad i=\overline{1,N}.
\end{equation}
On the other hand, it follows from the results of \cite{khach28} that the unique solution of system \eqref{Khachatryan36} in the class of vectors
$$T:=\{\tau=(\tau_1,\ldots,\tau_N)^T,\, \tau_i\geq0,\,i=\overline{1,N}\,\ \mbox{and}\, \ \exists j_0\in \{1,2,\ldots,N,\} \ s.t. \  \tau_{j_0}>0 \}$$
is $\eta=(\eta_1,\ldots,\eta_N)^T.$

Thus $\lim\limits_{x\rightarrow-\infty}f_i(x)=0,$ $\lim\limits_{x\rightarrow+\infty}f_i(x)=\eta_i,$ $f_i\in C(\mathbb{R})$ and the functions $f_i(x),$ $i=\overline{1,N}$ are monotonically non-decreasing on $\mathbb{R}.$ Inequality  \eqref{Khachatryan34}  follows from these facts.
The lemma is proved.
\end{proof}
Repeating similar arguments, we can prove
\begin{lemma}
Let conditions  $1), 2), a), b), I)-III),$ $\alpha_i^+=0$ and $\alpha_i^->0,$ $i=\overline{1,N}$ are fulfilled. Then any monotone non-increasing solution  $f\in\Omega$ of problem  \eqref{Khachatryan1}, \eqref{Khachatryan5} satisfies the inequality:
$$f_i(x)\leq \eta_i,\quad x\in\mathbb{R},\quad i=\overline{1,N}.$$
Moreover, $\lim\limits_{x\rightarrow-\infty}f_i(x)=\eta_i,$ $i=\overline{1,N}.$
\end{lemma}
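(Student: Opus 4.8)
The plan is to mirror the proof of Lemma~2.7, interchanging the roles of $+\infty$ and $-\infty$ throughout. First I would rewrite system \eqref{Khachatryan1} in the split form \eqref{Khachatryan35}, isolating the singular contribution carried by $\mu_j-1$ from the pure convolution term with kernel $K_{ij}$, and then let $x\to-\infty$ in each coordinate so as to extract an algebraic system for the boundary vector $\alpha^-=(\alpha_1^-,\ldots,\alpha_N^-)^T$.

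For the first sum in \eqref{Khachatryan35}, condition $b)$ gives $\mu_j-1\in L_1^0(\mathbb{R})$, while Lemma~2.1 together with condition $I)$ makes $G_j(f_j)$ bounded; hence the product $(\mu_j-1)G_j(f_j)$ lies in $L_1^0(\mathbb{R})$, and since $K_{ij}\in L_1(\mathbb{R})\cap L_\infty(\mathbb{R})$ by condition $1)$, relation $q_2)$ forces each such convolution to vanish as $x\to-\infty$. For the second sum, the boundary condition \eqref{Khachatryan5} guarantees that $\lim_{t\to-\infty}f_j(t)=\alpha_j^-$ exists, so by continuity of $G_j$ (condition $I)$) the bounded nonnegative factor $G_j(f_j)$ has the finite limit $G_j(\alpha_j^-)$ at $-\infty$; applying $q_3)$ at $-\infty$ then yields the value $a_{ij}G_j(\alpha_j^-)$. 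Summing the two contributions produces the algebraic system
$$\alpha_i^-=\sum\limits_{j=1}^N a_{ij}G_j(\alpha_j^-),\quad i=\overline{1,N},$$
which is the exact analogue of \eqref{Khachatryan36}.

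Since $\alpha_i^->0$ for every $i$, the vector $\alpha^-$ belongs to the class $T$, so the uniqueness result of \cite{khach28} (already invoked in the proof of Lemma~2.7) identifies its only solution in $T$ as $\eta=(\eta_1,\ldots,\eta_N)^T$; thus $\alpha_i^-=\eta_i$, which is precisely the asserted limit $\lim_{x\to-\infty}f_i(x)=\eta_i$. To finish, I would invoke monotonicity: each $f_i$ is continuous (by \eqref{Khachatryan15}) and non-increasing, running from $\lim_{x\to-\infty}f_i(x)=\eta_i$ down to $\lim_{x\to+\infty}f_i(x)=\alpha_i^+=0$, so its supremum over $\mathbb{R}$ is its value in the limit at $-\infty$, namely $\eta_i$; hence $f_i(x)\leq\eta_i$ for all $x\in\mathbb{R}$, as required.

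The argument is essentially routine — the paper itself flags it as a repetition of Lemma~2.7 — and the only point meriting attention is the one-sided application of $q_3)$ at $-\infty$: one must confirm that the factor $G_j(f_j)$ has a genuine finite limit there, which is supplied by the boundary condition \eqref{Khachatryan5} together with the continuity of $G_j$. Monotonicity of the solution itself enters only at the very last step, to convert the limit value at $-\infty$ into the uniform upper bound $\eta_i$.
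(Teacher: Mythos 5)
Your proposal is correct and follows essentially the same route as the paper: the paper proves this lemma only by the remark that one repeats the proof of Lemma~2.7, which is exactly what you do --- splitting the system as in \eqref{Khachatryan35}, killing the singular term via $q_2)$, applying $q_3)$ at $-\infty$ to obtain the algebraic system $\alpha_i^-=\sum_{j=1}^N a_{ij}G_j(\alpha_j^-)$, identifying $\alpha^-=\eta$ by the uniqueness result of \cite{khach28} in the class $T$, and then using continuity and monotone non-increase to get $f_i(x)\leq\eta_i$. Your explicit verification that $G_j(f_j)$ has a genuine finite one-sided limit before invoking $q_3)$ is a point the paper leaves implicit, but it is the same argument.
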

The following key lemma plays an important role in our further arguments:
\begin{lemma}
Under the conditions of Lemma~2.4, for an arbitrary solution  $f\in\Omega$ of problem  \eqref{Khachatryan1}, \eqref{Khachatryan5} the inclusions $G_i(f_i)\in L_1(0,+\infty),$ $i=\overline{1,N}$ take place
\end{lemma}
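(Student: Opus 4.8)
The plan is to piggyback on the estimate already obtained inside the proof of Lemma~2.4, whose hypotheses are identical to those assumed here. First I would recall what Lemma~2.4 gives us: since $\alpha_i^+=0$, $i=\overline{1,N}$, there is a number $r>0$ with $f_i(x)<\eta_i/2$ for $x>r$ (this is \eqref{Khachatryan23}), together with the inclusion $f_i\in L_1(r,+\infty)$, $i=\overline{1,N}$. The whole task then reduces to controlling the difference $G_i(f_i)-f_i$ on $(r,+\infty)$ and adding $f_i$ back.

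Second, I would observe that $G_i(u)\ge u$ for every $u\in[0,\eta_i]$. Indeed, by conditions $II)$ and $III)$ the function $G_i$ is concave on $[0,+\infty)$ with $G_i(0)=0$ and $G_i(\eta_i)=\eta_i$, so on the segment $[0,\eta_i]$ its graph lies above the chord joining $(0,0)$ and $(\eta_i,\eta_i)$, which is exactly the line $y=u$. In particular, on the region $x>r$, where $0\le f_i(x)<\eta_i/2<\eta_i$, we have $G_i(f_i(x))-f_i(x)\ge0$, $i=\overline{1,N}$.

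Third --- and this is the step that does the real work --- I would return to inequality \eqref{Khachatryan26} established in the proof of Lemma~2.4, namely
\[
0\le\sum_{i=1}^N\eta_i\int_r^{r_0}\bigl(G_i(f_i(x))-f_i(x)\bigr)\,dx\le 3\sum_{i=1}^N\eta_i\sum_{j=1}^N\eta_j\int_0^\infty K_{ij}(\tau)\tau\,d\tau,
\]
valid for every $r_0\ge r$. The right-hand side is a finite constant independent of $r_0$ (by condition $2)$), and, by the previous step, every summand on the left is nonnegative. Hence, letting $r_0\to+\infty$ and using $\eta_i>0$, each individual integral $\int_r^{+\infty}\bigl(G_i(f_i(x))-f_i(x)\bigr)\,dx$ is finite, i.e. $G_i(f_i)-f_i\in L_1(r,+\infty)$, $i=\overline{1,N}$.

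Finally, I would combine this with $f_i\in L_1(r,+\infty)$ to conclude that $G_i(f_i)=\bigl(G_i(f_i)-f_i\bigr)+f_i\in L_1(r,+\infty)$; on the compact interval $[0,r]$ the composition $G_i(f_i)$ is continuous (by \eqref{Khachatryan15} and condition $I)$) and hence bounded, so it is summable there as well. Adding the two intervals yields $G_i(f_i)\in L_1(0,+\infty)$, $i=\overline{1,N}$. The only delicate point is the passage from the summed bound to the coordinatewise one, which is legitimate precisely because the nonnegativity established in the second step forces each term of the sum to be separately bounded; no new estimate on the singular factor $\mu_j$ is needed, since it was already absorbed in deriving \eqref{Khachatryan26}.
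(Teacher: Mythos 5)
Your proof is correct and follows essentially the same route as the paper's: both extract $G_i(f_i)-f_i\in L_1(r,+\infty)$ from estimate \eqref{Khachatryan26}, treat $[0,r]$ by continuity of $f_i$ and $G_i$, and then add back $f_i\in L_1(0,+\infty)$ supplied by Lemma~2.4. The only (welcome) difference is that you make explicit the pointwise nonnegativity $G_i(u)\ge u$ on $[0,\eta_i]$, which justifies passing from the summed bound in \eqref{Khachatryan26} to each coordinate separately --- a step the paper compresses into the word ``immediately''.
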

\begin{proof}
First, note that estimate  \eqref{Khachatryan26} immediately implies that $G_i(f_i)-f_i\in L_1(r,+\infty),$ $i=\overline{1,N}.$ Since $f_i(x)\geq0,$ $x\in\mathbb{R},$ $f_i\in C(\mathbb{R})$ and $G_i\in C[0,+\infty),$ $i=\overline{1,N},$ then $G_i(f_i)-f_i\in L_1(0,r),$ $i=\overline{1,N}.$ Consequently, $G_i(f_i)-f_i\in L_1(0,+\infty),$ $i=\overline{1,N}.$ Using Lemma~2.4 and the obvious inequality
$$0\leq G_i(f_i(x))\leq |G_i(f_i(x))-f_i(x)|+f_i(x),\quad x\in [0,+\infty),\quad i=\overline{1,N}$$
we come to the end of the proof of the lemma.
\end{proof}
Similarly, we prove
\begin{lemma}
Under the conditions of Lemma~2.5, for an arbitrary solution  $f\in\Omega$ of problem  \eqref{Khachatryan1}, \eqref{Khachatryan5} the inclusions $G_i(f_i)\in L_1(-\infty,0),\, i=\overline{1,N}$ hold.
\end{lemma}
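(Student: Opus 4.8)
The plan is to mirror, on the half-line $(-\infty,0]$, exactly the argument used to establish Lemma~2.9 on $[0,+\infty)$. The single ingredient that has to be produced is the negative-axis analogue of the a priori bound \eqref{Khachatryan26}, i.e. an estimate of the form
\[
0\leq \sum_{i=1}^N \eta_i\int_{-r_0}^{-r}\bigl(G_i(f_i(x))-f_i(x)\bigr)\,dx\leq 3\sum_{i=1}^N\sum_{j=1}^N\eta_i\eta_j\int_0^\infty K_{ij}(\tau)\tau\,d\tau,
\]
valid for every $r_0\geq r$, where $r>0$ is chosen so that $f_i(x)<\eta_i/2$ for all $x<-r$ and $i=\overline{1,N}$; such an $r$ exists precisely because $\alpha_i^-=0$, $i=\overline{1,N}$.

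First I would reproduce the derivation of Lemma~2.4 with the only change being the region of integration. Writing inequality \eqref{Khachatryan24} for $x<-r$ and integrating it over $x\in(-r_0,-r)$ in place of $(r,r_0)$, I would split the inner convolution integral over $(-\infty,-r_0]$, $[-r_0,0]$ and $[0,+\infty)$ and bound the three resulting boundary contributions exactly as the terms $I_1^i,I_2^i,I_3^i$ were bounded in the proof of Lemma~2.4. The decisive point is that the evenness of the kernel, $K_{ij}(-\tau)=K_{ij}(\tau)$ from condition~$1)$, ensures that after the substitutions $\tau\mapsto-\tau$ the very same one-sided tail integrals $\int_0^\infty\tau K_{ij}(\tau)\,d\tau$ reappear, so their finiteness, guaranteed by condition~$2)$, carries over verbatim to the reflected region. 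Letting $r_0\to+\infty$ and invoking the monotonicity inequality \eqref{Khachatryan27} (now read on $(-\infty,-r)$) then yields $G_i(f_i)-f_i\in L_1(-\infty,-r)$, $i=\overline{1,N}$.

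It remains to pass from $(-\infty,-r)$ to $(-\infty,0)$ and then to remove the difference with $f_i$. Since $f_i\geq0$, $f_i\in C(\mathbb{R})$ by \eqref{Khachatryan15}, and $G_i\in C[0,+\infty)$, the function $G_i(f_i)-f_i$ is continuous, hence summable on the compact interval $[-r,0]$; combined with the previous step this gives $G_i(f_i)-f_i\in L_1(-\infty,0)$. Lemma~2.5 supplies $f_i\in L_1(-\infty,0)$, and the elementary pointwise bound $0\leq G_i(f_i(x))\leq |G_i(f_i(x))-f_i(x)|+f_i(x)$ then forces $G_i(f_i)\in L_1(-\infty,0)$ for each $i=\overline{1,N}$, which is the assertion.

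I do not expect a genuine obstacle: the statement is the exact reflection of Lemma~2.9, and the only step needing care is verifying that reflecting the integration domain does not damage the boundary estimates $I_1^i,I_2^i,I_3^i$. That verification rests entirely on the symmetry $K_{ij}(-\tau)=K_{ij}(\tau)$, which lets the one-sided moment condition $\int_0^\infty\tau K_{ij}(\tau)\,d\tau<+\infty$ control both tails identically; everything else is the bookkeeping already carried out for the positive half-line.
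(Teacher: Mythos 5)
Your proposal is correct and follows essentially the same route as the paper: the paper proves Lemma~2.10 by remarking that it is obtained ``similarly'' to Lemma~2.9, i.e.\ by mirroring the estimate \eqref{Khachatryan26} of Lemma~2.4 onto $(-\infty,-r)$ (where the evenness $K_{ij}(-\tau)=K_{ij}(\tau)$ makes the moment bounds $I_1^i,I_2^i,I_3^i$ carry over), covering $[-r,0]$ by continuity, and then combining Lemma~2.5 with the bound $0\leq G_i(f_i(x))\leq |G_i(f_i(x))-f_i(x)|+f_i(x)$. Your only cosmetic slip is attributing the passage to $G_i(f_i)-f_i\in L_1(-\infty,-r)$ to \eqref{Khachatryan27}; what is actually needed there is just the pointwise nonnegativity of $G_i(f_i)-f_i$ on that set (from $f_i<\eta_i/2$ and conditions $I)$--$III)$), which you have available.
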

\section{On the solvability of system  \eqref{Khachatryan1}}
\subsection{Successive approximations for system  \eqref{Khachatryan1}.}
Consider the following successive approximations \eqref{Khachatryan1}
\begin{equation}\label{Khachatryan37}
\begin{array}{c}
\displaystyle f_i^{(n+1)}(x)=\sum\limits_{j=1}^N\int\limits_{-\infty}^\infty K_{ij}(x-t) \mu_j (t)G_j(f_j^{(n)}(t))dt,\\
\displaystyle f_i^{(0)}(x)\equiv\xi_i,\quad x\in\mathbb{R},\quad i=\overline{1,N},\quad n=0,1,2,\ldots,
\end{array}
\end{equation}
where the numbers  $\xi_i>\eta_i,$ $i=\overline{1,N}$ are uniquely determined from the system of nonlinear algebraic equations  \eqref{Khachatryan3}. By induction on  $n,$ it is easy to prove that
\begin{equation}\label{Khachatryan38}
f_i^{(n)}(x)\geq \eta_i, \quad x\in\mathbb{R},\quad i=\overline{1,N},\quad n=0,1,2,\ldots,
\end{equation}
\begin{equation}\label{Khachatryan39}
f_i^{(n)}(\cdot)\in C(\mathbb{R}),\quad i=\overline{1,N},\quad n=0,1,2,\ldots.
\end{equation}
We now prove that
\begin{equation}\label{Khachatryan40}
f_i^{(n)}(x)\quad \mbox{decrease monotonically in } \quad n,\quad  x\in\mathbb{R},\quad i=\overline{1,N}.
\end{equation}
First, we check the estimates  $f_i^{(1)}(x)\leq f_i^{(0)}(x),$ $x\in\mathbb{R},$ $i=\overline{1,N}.$ Using conditions  $1), 2), a), b), I), II)$ and the fact that the vector  $\xi=(\xi_1,\ldots,\xi_N)^T$ is a solution of system  \eqref{Khachatryan3}, from \eqref{Khachatryan1} we have
$$f_i^{(1)}(x)\leq \sum\limits_{j=1}^N G_j(\xi_j)\left(\int\limits_{-\infty}^\infty K_{ij}(x-t)dt+ \int\limits_{-\infty}^\infty (\mu_j(t)-1)dt\sup\limits_{\tau\in\mathbb{R}}(K_{ij}(\tau))\right)=$$
$$=\sum\limits_{j=1}^N  (a_{ij}+b_{ij})G_j(\xi_j)=\xi_i=f_i^{(0)}(x),\quad x\in\mathbb{R},\quad i=\overline{1,N}.$$
Assume that  $f_i^{(n)}(x)\leq f_i^{(n-1)}(x),$ $x\in\mathbb{R},$ $i=\overline{1,N}$ for some natural  $n.$ Then, considering  \eqref{Khachatryan38} and conditions  $1), 2), a), b), I), II)$ from \eqref{Khachatryan1} we obtain that  $f_i^{(n+1)}(x)\leq f_i^{(n)}(x),$ $x\in\mathbb{R},$ $i=\overline{1,N}.$ Thus, based on  \eqref{Khachatryan38}-\eqref{Khachatryan40} we can assert that the sequence of continuous vector functions  $f^{(n)}(x)=(f_1^{(n)}(x),\ldots,f_N^{(n)}(x))^T,$ $n=0,1,2,\ldots,$  has a pointwise limit when  $n\rightarrow\infty: \lim\limits_{n\rightarrow\infty}f_i^{(n)}(x)=f_i(x),$ $i=\overline{1,N},$ and by virtue of conditions  $1), 2), a), b), I), II)$ and B. Levy's theorem  (see \cite{kol25}) the limit vector function  $f(x)=(f_1(x),\ldots,f_N(x))^T,$ satisfies system  \eqref{Khachatryan1}. From \eqref{Khachatryan38} and \eqref{Khachatryan40} it also follows that the two-sided inequality
\begin{equation}\label{Khachatryan41}
\eta_i\leq f_i(x)\leq \xi_i, \quad x\in\mathbb{R},\quad i=\overline{1,N},
\end{equation}
moreover, by virtue of  \eqref{Khachatryan15} $f_i\in C(\mathbb{R}),$ $i=\overline{1,N}.$

In the next section, under additional constraint  $IV)$ on the nonlinearities  $G_j(u),$ $j=\overline{1,N},$ we obtain an uniform estimate for the difference $f_i^{(n)}(x)-f(x),$ $x\in\mathbb{R},$ $ i=\overline{1,N},$ $n=1,2,\ldots,$ from which the uniform convergence of successive approximations  \eqref{Khachatryan37} to the continuous solution  $f(x)=(f_1(x),\ldots,f_N(x))^T,$   follow, with the rate of some geometric progression.
\subsection{Uniform convergence of successive approximations  \eqref{Khachatryan37}.}
Assume that condition  $IV)$ is also satisfied and consider the following functions
$$B_i(x):=\frac{f_i^{(1)}(x)}{f_i^{(0)}(x)},\quad x\in\mathbb{R},\quad i=\overline{1,N}.$$
From \eqref{Khachatryan38}-\eqref{Khachatryan40} it immediately follows that
\begin{equation}\label{Khachatryan42}
\frac{\eta_i}{\xi_i}\leq B_i(x)\leq 1, \quad x\in\mathbb{R},\quad i=\overline{1,N},
\end{equation}
\begin{equation}\label{Khachatryan43}
B_i\in C(\mathbb{R}),\quad i=\overline{1,N}.
\end{equation}
Let $\sigma:=\min\limits_{1\leq i\leq N}\left(\frac{\eta_i}{\xi_i}\right).$ Since $\xi_i>\eta_i>0,$ $i=\overline{1,N},$ then $\sigma\in(0,1).$

From \eqref{Khachatryan42}  it follows that
\begin{equation}\label{Khachatryan44}
\sigma f_j^{(0)}(t)\leq f_j^{(1)}(t)\leq f_j^{(0)}(t), \quad t\in\mathbb{R},\quad j=\overline{1,N}.
\end{equation}
Taking into account conditions  $I), II)$ and $IV)$ from \eqref{Khachatryan44} we arrive at inequalities
\begin{equation}\label{Khachatryan45}
\varphi(\sigma)G_j(f_j^{(0)}(t))\leq G_j(\sigma f_j^{(0)}(t))\leq G_j(f_j^{(1)}(t))\leq G_j(f_j^{(0)}(t)), \quad t\in\mathbb{R},\quad j=\overline{1,N}.
\end{equation}
Multiply both parts of \eqref{Khachatryan45} by the functions  $K_{ij}(x,t)\mu_j(t),$ $x,t\in \mathbb{R},$ $i,j=\overline{1,N},$ then integrate the obtained inequalities by  $t$ from $-\infty$ to $+\infty.$ As a result, we get
\begin{equation}\label{Khachatryan46}
\begin{array}{c}
\displaystyle\varphi(\sigma)\int\limits_{-\infty}^\infty K_{ij}(x,t)\mu_j(t) G_j(f_j^{(0)}(t))dt\leq  \int\limits_{-\infty}^\infty K_{ij}(x,t)\mu_j(t) G_j(f_j^{(1)}(t))dt\leq \\
\displaystyle \leq\int\limits_{-\infty}^\infty K_{ij}(x,t)\mu_j(t)G_j(f_j^{(0)}(t))dt, \quad x\in\mathbb{R},\quad i,j=\overline{1,N}.
\end{array}
\end{equation}
Summing both parts of  \eqref{Khachatryan46} over all  $j=\overline{1,N}$ and considering  \eqref{Khachatryan37} we arrive at
\begin{equation}\label{Khachatryan47}
\varphi(\sigma) f_i^{(1)}(x)\leq f_i^{(2)}(x)\leq f_i^{(1)}(x), \quad x\in\mathbb{R},\quad i=\overline{1,N}.
\end{equation}
Since $\varphi(\sigma)\in(0,1),$ $f_i^{(1)}, f_i^{(2)}\in[\eta_i,\xi_i],$ $i=\overline{1,N},$ then again taking into account conditions  $I), II),$ and $IV)$ from \eqref{Khachatryan47} we obtain
\begin{equation}\label{Khachatryan48}
\varphi(\varphi(\sigma))G_j(f_j^{(1)}(t))\leq G_j(\varphi(\sigma) f_j^{(1)}(t))\leq G_j(f_j^{(2)}(t))\leq G_j(f_j^{(1)}(t)), \ t\in\mathbb{R},\ j=\overline{1,N}.
\end{equation}
Again  multiply both parts of  \eqref{Khachatryan48} by $K_{ij}(x,t)\mu_j(t),$ $x,t\in\mathbb{R},\quad i,j=\overline{1,N},$ integrate over  $t$ on $\mathbb{R}$ and sum the obtained inequalities over all $j=\overline{1,N}.$ As a result, we arrive at the following chain of inequalities:
\begin{equation}\label{Khachatryan49}
\varphi(\varphi(\sigma)) f_i^{(2)}(x)\leq f_i^{(3)}(x)\leq f_i^{(2)}(x), \quad x\in\mathbb{R},\quad i=\overline{1,N}.
\end{equation}
Continuing this process in the $n$-th step, we obtain
\begin{equation}\label{Khachatryan50}
\underbrace{\varphi(\varphi\ldots\varphi(\sigma))}_n f_i^{(n)}(x)\leq f_i^{(n+1)}(x)\leq f_i^{(n)}(x), \quad x\in\mathbb{R},\quad i=\overline{1,N}.
\end{equation}
From the properties of the function  $\varphi$ it immediately follows that  (see Fig. 2)
\begin{equation}\label{Khachatryan51}
\varphi(\sigma)\geq k\sigma+1-k,
\end{equation}
where $k:=\dfrac{1-\varphi(\frac{\sigma}{2})}{1-\frac{\sigma}{2}}\in(0,1).$

Since $\sigma\in(0,1),$ $k\in(0,1),$ then $k\sigma+1-k\in(\sigma,1).$

\begin{figure}[!h]
  \centering
  % Requires \usepackage{graphicx}
  \includegraphics[width=3.5in]{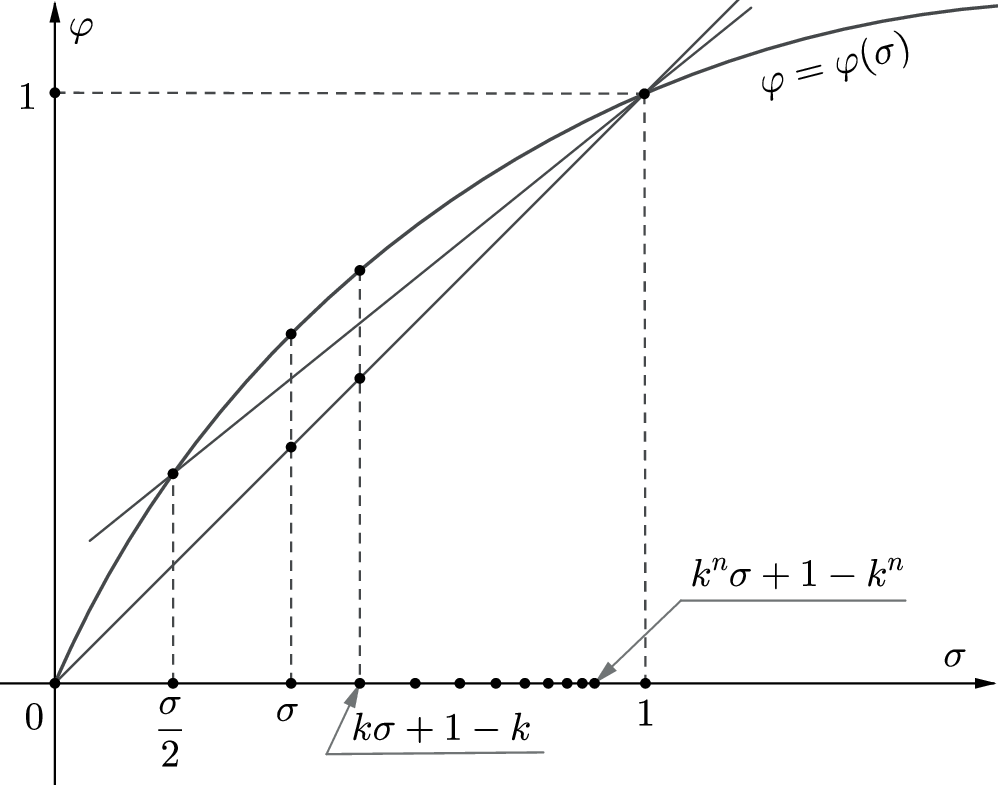}\\
  \caption{Graph of the function $\varphi(\sigma)$.}
\end{figure}

Therefore, from  \eqref{Khachatryan51} due to the monotonicity of  $\varphi$ we obtain
$$\varphi(\varphi(\sigma))\geq \varphi(k\sigma+1-k)\geq k(k\sigma+1-k)+1-k=k^2\sigma+1-k^2.$$
Continuing this process in the $n$-th step, we obtain
\begin{equation}\label{Khachatryan52}
\underbrace{\varphi(\varphi\ldots\varphi(\sigma))}_n\geq k^n\sigma+1-k^n.
\end{equation}
From \eqref{Khachatryan50} and \eqref{Khachatryan52} it follows that
\begin{equation}\label{Khachatryan53}
0\leq f_i^{(n)}(x)- f_i^{(n+1)}(x)\leq k^n(1-\sigma), \quad x\in\mathbb{R},\quad i=\overline{1,N},\quad n=1,2,\ldots.
\end{equation}
We now write inequality  \eqref{Khachatryan53} for the values of indices  $n+1, n+2, \ldots, n+m:$
$$0\leq f_i^{(n+1)}(x)- f_i^{(n+2)}(x)\leq k^{n+1}(1-\sigma), \quad x\in\mathbb{R},\quad i=\overline{1,N}$$
$$0\leq f_i^{(n+2)}(x)- f_i^{(n+3)}(x)\leq k^{n+2}(1-\sigma), \quad x\in\mathbb{R},\quad i=\overline{1,N}$$
$$\ldots\ldots\ldots\ldots\ldots\ldots\ldots\ldots\ldots\ldots\ldots\ldots\ldots\ldots
\ldots\ldots\ldots\ldots\ldots$$
$$0\leq f_i^{(n+m)}(x)- f_i^{(n+m+1)}(x)\leq k^{n+m}(1-\sigma), \quad x\in\mathbb{R},\quad i=\overline{1,N}.$$
Summing these  inequalities and inequality  \eqref{Khachatryan53} we come to  the estimate
\begin{equation}\label{Khachatryan54}
\begin{array}{c}
\displaystyle 0\leq f_i^{(n)}(x)- f_i^{(n+m+1)}(x)\leq (1-\sigma)k^{n}(1+\ldots+k^m)\leq \frac{k^n(1-\sigma)}{1-k},\\
\displaystyle x\in\mathbb{R},\quad i=\overline{1,N},\quad  n=1,2,\ldots,\quad m=0,1,2,\ldots.
\end{array}
\end{equation}
In \eqref{Khachatryan54} letting $m\rightarrow\infty$ we obtain
\begin{equation}\label{Khachatryan55}
0\leq f_i^{(n)}(x)- f_i(x)\leq \frac{k^n(1-\sigma)}{1-k},\quad x\in\mathbb{R},\quad i=\overline{1,N},\quad  n=1,2,\ldots.
\end{equation}
From \eqref{Khachatryan55} taking into account that $k\in(0,1)$ it follows that successive approximations converge uniformly to a continuous solution $f(x)=(f_1(x),\ldots,f_N(x))^T$ on $\mathbb{R}$ of the system  \eqref{Khachatryan1}. By repeating similar reasoning as in the proof of Lemma~2.6 and considering  \eqref{Khachatryan41} we can confirm  that under conditions  $1), 2), a), b)$ and $I)-III)$ the solution  $f(x)=(f_1(x),\ldots,f_N(x))^T$ of the system \eqref{Khachatryan1} has the integral asymptotics  $f_i-\eta_i\in L_1^0(\mathbb{R}),$ $i=\overline{1,N}.$

Thus, based on the above, we arrive at the following theorem:
\begin{theorem}
Under conditions  $1), 2), a), b)$ and $I)-III)$ the system of nonlinear integral equations  \eqref{Khachatryan1} has a positive bounded and continuous on $\mathbb{R}$ solution $f(x)=(f_1(x),\ldots,f_N(x))^T,$ besides $f_i-\eta_i\in L_1^0(\mathbb{R}),$ $i=\overline{1,N}$ and double inequality  \eqref{Khachatryan41} holds.  Moreover, if condition  $IV),$ is also satisfied, then  $f(x)$ is a uniform limit of successive approximations  \eqref{Khachatryan37} and estimate  \eqref{Khachatryan55} holds.
\end{theorem}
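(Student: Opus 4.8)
The plan is to construct the solution through the monotone iteration \eqref{Khachatryan37} and then read off every assertion of the theorem from the qualitative behaviour of that iteration. First I would verify by induction on $n$ the three structural facts \eqref{Khachatryan38}--\eqref{Khachatryan40}: that each iterate stays above $\eta_i$, that each iterate is continuous (via the convolution-continuity splitting used to derive \eqref{Khachatryan15}, one piece being a convolution of an $L_1$ factor with an $L_\infty$ factor and the other a convolution of an $L_\infty$ factor with an $L_1$ factor), and that the sequence is pointwise non-increasing in $n$. The base step $f_i^{(1)}\le f_i^{(0)}=\xi_i$ uses precisely that $\xi$ solves \eqref{Khachatryan3}; the inductive step uses monotonicity of $G_j$ together with conditions $1), a), b)$. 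Being monotone and bounded below, the iterates converge pointwise to a limit $f_i(x)\in[\eta_i,\xi_i]$, which is exactly \eqref{Khachatryan41}.

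Second, I would pass to the limit under the integral sign in \eqref{Khachatryan37}. Monotone convergence (B.~Levi) is the right tool, since the integrands $K_{ij}(x-t)\mu_j(t)G_j(f_j^{(n)}(t))$ are nonnegative, decrease monotonically in $n$, and have finite integral at $n=0$ thanks to the majorant that produces the bound $a_{ij}+b_{ij}$. This shows that the limit $f$ solves \eqref{Khachatryan1}. Continuity of $f$ follows again from the convolution-continuity argument, and the integral asymptotics $f_i-\eta_i\in L_1^0(\mathbb{R})$ follow by repeating verbatim the reasoning of Lemma~2.6, now legitimate because \eqref{Khachatryan41} supplies exactly the two-sided control $\eta_i\le f_i\le\xi_i$ that lemma requires.

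Third, under the additional hypothesis $IV)$ I would upgrade pointwise convergence to uniform convergence with a geometric rate by propagating ratios of consecutive iterates. Starting from $\sigma f_j^{(0)}\le f_j^{(1)}\le f_j^{(0)}$ with $\sigma:=\min_i(\eta_i/\xi_i)\in(0,1)$, I apply the sub-homogeneity estimate $G_j(\sigma u)\ge\varphi(\sigma)G_j(u)$ of $IV)$ and push it through the operator to get the one-step sandwich $\varphi(\sigma)f_i^{(1)}\le f_i^{(2)}\le f_i^{(1)}$; iterating yields the chain \eqref{Khachatryan50}. I then bound the iterated composition from below by linearising $\varphi$: concavity together with $\varphi(1)=1$ gives the chord estimate $\varphi(\sigma)\ge k\sigma+1-k$ with $k:=(1-\varphi(\sigma/2))/(1-\sigma/2)\in(0,1)$, and since the affine map $\sigma\mapsto k\sigma+1-k$ is increasing and fixes $1$, iterating it gives $\underbrace{\varphi(\varphi\ldots\varphi(\sigma))}_{n}\ge k^n\sigma+1-k^n$ as in \eqref{Khachatryan52}. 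Combining with \eqref{Khachatryan50} produces the contraction $0\le f_i^{(n)}-f_i^{(n+1)}\le k^n(1-\sigma)$, and a telescoping sum of the tail followed by $m\to\infty$ delivers \eqref{Khachatryan55} and hence uniform convergence.

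The step I expect to be the crux is this last one, specifically the transfer from the pointwise functional inequality $G_j(\sigma u)\ge\varphi(\sigma)G_j(u)$ to the geometric decay of the iterated $\varphi$. Everything hinges on producing the correct affine minorant of $\varphi$: the chord through $(\sigma/2,\varphi(\sigma/2))$ and $(1,1)$ must lie below the graph of $\varphi$ on $[\sigma,1]$ (here concavity enters) and its slope $k$ must be strictly below $1$ (here $\varphi(\sigma/2)>\sigma/2$, itself forced by $\varphi$ being concave with $\varphi(0)=0$ and $\varphi(1)=1$, is essential). Once $k\in(0,1)$ is secured, the telescoping estimate and the uniform geometric-rate convergence are routine bookkeeping.
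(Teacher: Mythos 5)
Your proposal is correct and coincides essentially step for step with the paper's own proof: the same monotone iteration \eqref{Khachatryan37} started from $\xi$, with \eqref{Khachatryan38}--\eqref{Khachatryan40} proved by induction, passage to the limit via B.~Levi's theorem, continuity through the convolution splitting behind \eqref{Khachatryan15}, the asymptotics $f_i-\eta_i\in L_1^0(\mathbb{R})$ by rerunning Lemma~2.6 using \eqref{Khachatryan41}, and, under condition $IV)$, the identical chord minorant $\varphi(\sigma)\geq k\sigma+1-k$ with $k=\frac{1-\varphi(\sigma/2)}{1-\sigma/2}$ driving \eqref{Khachatryan50}--\eqref{Khachatryan55}. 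You even isolate the same crux (that $k\in(0,1)$ makes the iterated $\varphi$ converge geometrically to $1$), so there is nothing to add.
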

\subsection{On the uniqueness of a solution to system  \eqref{Khachatryan1}.}
The following theorem holds:
\begin{theorem}
Let conditions  $1), 2), a), b)$ and $I)-IV)$ are fulfilled. Then system  \eqref{Khachatryan1} in
the following class of vector functions
\begin{equation}\label{Khachatryan56}
\mathfrak{M}:=\{f\in\Omega: \,\, \mbox{there exists}\,\,\, j_0\in\{1,2,\ldots,N\} \,\,\mbox{such that }\,\, \alpha_{j_0}^\pm>0\}
\end{equation}
cannot have more than one solution.
\end{theorem}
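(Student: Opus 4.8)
The plan is to compare two solutions $f,g\in\mathfrak{M}$ by means of the largest scaling factor $\lambda$ for which $f$ dominates $\lambda g$ pointwise, and to show that $\lambda<1$ is incompatible with the equation. First I would record the a priori information available for every element of $\mathfrak{M}$. If $f\in\mathfrak{M}$, then some $\alpha_{j_0}^\pm>0$, so Lemmas~2.1 and~2.3 apply and give $\eta_i\le f_i(x)\le\xi_i$ for all $x\in\mathbb{R}$ and $i=\overline{1,N}$, while Lemma~2.6 gives $\lim_{x\to\pm\infty}f_i(x)=\eta_i$; the same holds for $g$. In particular each coordinate is continuous and bounded away from zero, so the ratios $f_i/g_i$ are well defined, continuous on $\mathbb{R}$, and tend to $1$ at $\pm\infty$.

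Next I would set
$$\lambda:=\min_{1\le i\le N}\inf_{x\in\mathbb{R}}\frac{f_i(x)}{g_i(x)},$$
which is the largest $t>0$ with $f_i(x)\ge t\,g_i(x)$ for all $i$ and $x$. Since $f_i\ge\eta_i$ and $g_i\le\xi_i$, we get $\lambda\ge\min_i(\eta_i/\xi_i)>0$. The goal is to prove $\lambda\ge1$: this forces $f_i\ge g_i$, and interchanging $f$ and $g$ then yields $f\equiv g$. Suppose, for contradiction, that $\lambda<1$. Because each $f_i/g_i\to1$ at $\pm\infty$ and $\lambda<1$, the infimum defining $\lambda$ exceeds $\lambda$ outside some compact interval, so by continuity and the Weierstrass theorem it is attained: there exist an index $i_*$ and a point $x_*\in\mathbb{R}$ with $f_{i_*}(x_*)=\lambda\,g_{i_*}(x_*)$, while $f_j(t)\ge\lambda\,g_j(t)$ for all $j$ and $t$.

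The contradiction then comes from evaluating \eqref{Khachatryan1} at $x_*$. Since $G_j$ increases and $f_j(t)\ge\lambda g_j(t)$, and since $g_j(t)\in[\eta_j,\xi_j]$, condition~$IV)$ applied with $\sigma=\lambda$ and $u=g_j(t)$ gives $G_j(f_j(t))\ge G_j(\lambda g_j(t))\ge\varphi(\lambda)G_j(g_j(t))$ for every $t$; moreover $\varphi(\lambda)>\lambda$ by \eqref{Khachatryan51} (strict concavity in $III)$ yields the same strict bound directly). Consequently, using the positivity of $K_{i_*j}\mu_j$,
$$\lambda\,g_{i_*}(x_*)=f_{i_*}(x_*)=\sum_{j=1}^N\int_{-\infty}^\infty K_{i_*j}(x_*-t)\mu_j(t)G_j(f_j(t))\,dt\ge\varphi(\lambda)\sum_{j=1}^N\int_{-\infty}^\infty K_{i_*j}(x_*-t)\mu_j(t)G_j(g_j(t))\,dt=\varphi(\lambda)\,g_{i_*}(x_*),$$
and dividing by $g_{i_*}(x_*)\ge\eta_{i_*}>0$ gives $\lambda\ge\varphi(\lambda)>\lambda$, which is absurd. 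Hence $\lambda\ge1$, i.e.\ $f_i\ge g_i$ on $\mathbb{R}$; interchanging the roles of $f$ and $g$ gives $g_i\ge f_i$, whence $f\equiv g$.

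The main obstacle is the attainment step: one must be certain that the extremal ratio $\lambda$ is realized at a genuine interior point rather than only in a limit, for only then can a concrete point $x_*$ be substituted into the fixed-point relation. This is precisely what the common asymptotics $f_i,g_i\to\eta_i$ from Lemma~2.6 guarantee once $\lambda<1$. After that, the strict superhomogeneity of the nonlinearities (equivalently $\varphi(\sigma)>\sigma$ on $(0,1)$) together with the strict positivity of the kernel converts the relation at $x_*$ into the desired contradiction.
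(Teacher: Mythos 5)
Your proof is correct, and it takes a genuinely different route from the paper's. The paper proves Theorem~3.2 by playing the hypothetical second solution $f^*$ off against the particular solution $f$ built by the iterations \eqref{Khachatryan37}: by induction $f_i^*(x)\le f_i^{(n)}(x)$ (the base case $n=0$ is Lemma~2.1, since $f^{(0)}\equiv\xi$), hence $f^*\le f$; together with Lemma~2.3 this gives the sandwich $\sigma f_i\le f_i^*\le f_i$ with $\sigma=\min_i(\eta_i/\xi_i)$, and then the same $\varphi$-iteration as in Section~3.2 produces $0\le f_i(x)-f_i^*(x)\le k^n(1-\sigma)\to0$. You instead compare two arbitrary solutions symmetrically through the extremal ratio $\lambda=\min_i\inf_x f_i(x)/g_i(x)$, use the asymptotics $f_i,g_i\to\eta_i$ from Lemma~2.6 to force attainment of the infimum at a finite point $x_*$, and kill $\lambda<1$ by a single application of the equation at $x_*$. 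What each buys: the paper's argument is quantitative (it inherits the geometric rate $k^n$) and needs nothing about behaviour at $\pm\infty$ beyond Lemma~2.3, but it ties uniqueness to the constructed solution; yours decouples uniqueness entirely from the constructive existence proof and avoids all iteration, at the price of invoking Lemma~2.6.

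One step needs repair, though it is repairable in two ways. Your parenthetical justification of strictness is off: condition $IV)$ by itself does not give $\varphi(\lambda)>\lambda$ for $\lambda\in(0,1)$ — the identity $\varphi(\sigma)=\sigma$ satisfies every hypothesis of $IV)$ (it is concave, monotone, continuous, fixes $0$ and $1$, and the inequality $G_j(\sigma u)\ge\sigma G_j(u)$ always holds for concave $G_j$ with $G_j(0)=0$), and \eqref{Khachatryan51} yields $\varphi(\lambda)>\lambda$ only via the paper's assertion $k\in(0,1)$, which tacitly presumes $\varphi(\sigma/2)>\sigma/2$. The robust route is the one you gesture at: strict concavity of $G_j$ (condition $III)$) with $G_j(0)=0$ gives the pointwise strict bound $G_j(\lambda g_j(t))>\lambda G_j(g_j(t))$ for every $t$, since $g_j(t)\ge\eta_j>0$ and $\lambda\in(0,1)$; then, because $K_{i_*j}>0$ and $\mu_j>1$, the integrated inequality at $x_*$ is strict,
\begin{equation*}
\lambda\,g_{i_*}(x_*)=f_{i_*}(x_*)\ge\sum_{j=1}^N\int_{-\infty}^\infty K_{i_*j}(x_*-t)\mu_j(t)G_j(\lambda g_j(t))\,dt>\lambda\,g_{i_*}(x_*),
\end{equation*}
which contradicts attainment — so write the display this way rather than through $\varphi(\lambda)$. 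Conversely, if you do rely on $\varphi(\lambda)>\lambda$ (accepting the paper's $k\in(0,1)$), then the attainment step becomes unnecessary: $f_i(x)\ge\varphi(\lambda)g_i(x)$ holds for \emph{all} $x$, so the maximality of $\lambda$ alone gives $\lambda\ge\varphi(\lambda)$, a contradiction without any $x_*$. Attainment is needed precisely in the strict-concavity variant, where strictness is only pointwise; with either one-line fix, your argument stands.
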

\begin{proof}
Assume the opposite: system \eqref{Khachatryan1} in addition to the solution  $f(x)=(f_1(x),\ldots,f_N(x))^T$ constructed using successive approximations \eqref{Khachatryan37}, also has another solution  $f^*(x)=(f_1^*(x),\ldots,f_N^*(x))^T\in \mathfrak{M}.$ First we prove that
\begin{equation}\label{Khachatryan57}
f_i^*(x)\leq f_i(x),\quad x\in\mathbb{R},\quad i=\overline{1,N}.
\end{equation}
To this end, we verify that the following inequalities hold:
\begin{equation}\label{Khachatryan58}
f_i^*(x)\leq f_i^{(n)}(x),\quad x\in\mathbb{R},\quad i=\overline{1,N},\quad n=0,1,2,\ldots.
\end{equation}
In the case  $n=0$ inequalities \eqref{Khachatryan58} immediately follow from Lemma~2.1 and the definition of the zero approximation in iterations  \eqref{Khachatryan37}. Assume $f_i^*(x)\leq f_i^{(n)}(x),$ $x\in\mathbb{R},$ $i=\overline{1,N},$ for some natural  $n.$ Then, taking into account conditions  $1), a), I), II)$ from \eqref{Khachatryan37} and \eqref{Khachatryan1} we have
$$f_i^{(n+1)}(x)\geq \sum\limits_{j=1}^N\int\limits_{-\infty}^\infty K_{ij}(x,t) \mu_j (t)G_j(f_j^*(t))dt=f_i^*(x),\quad x\in\mathbb{R},\quad i=\overline{1,N}.$$
In \eqref{Khachatryan58} tending $n\rightarrow\infty$ we arrive at  \eqref{Khachatryan57}. On the other hand, according to Lemma~2.3
\begin{equation}\label{Khachatryan59}
f_i^*(x)\geq\eta_i, \quad x\in\mathbb{R},\quad i=\overline{1,N}.
\end{equation}
From \eqref{Khachatryan41}, \eqref{Khachatryan57} and \eqref{Khachatryan59} it follows that
$$\frac{\eta_i}{\xi_i}\leq \frac{f_i^*(x)}{f_i(x)}\leq 1, \quad x\in\mathbb{R},\quad i=\overline{1,N},$$
in particular, we obtain the estimate
\begin{equation}\label{Khachatryan60}
\sigma f_i(x)\leq f_i^*(x)\leq f_i(x), \quad x\in\mathbb{R},\quad i=\overline{1,N}.
\end{equation}
Then, following the same reasoning as in the proof of Theorem~3.1 (see section 3.2), we arrive at the inequality:
\begin{equation}\label{Khachatryan61}
0\leq f_i(x)-f_i^*(x)\leq k^n(1-\sigma), \quad x\in\mathbb{R},\quad i=\overline{1,N},\quad n=1,2,\ldots.
\end{equation}
In \eqref{Khachatryan61} tending $n\rightarrow\infty$ and considering the fact that  $k\in(0,1)$ we get $f_i(x)=f_i^*(x),$ $x\in\mathbb{R},$ $i=\overline{1,N}.$   The theorem is proved.
\end{proof}
\section{Absence of nontrivial solutions to system  \eqref{Khachatryan1}. Examples.}
\subsection{Theorem on the absence of a nontrivial solution to system  \eqref{Khachatryan1} in the case $\alpha_i^\pm=0,$ $i=\overline{1,N}.$}
\begin{theorem}
Let conditions  $1), 2), a), b)$ and $I)-IV)$ be satisfied. If  $\alpha_i^\pm=0,$ $i=\overline{1,N},$ then system  \eqref{Khachatryan1} in the class  $\Omega$ has only the trivial solution $f_i(x)\equiv0,$ $x\in\mathbb{R},$ $i=\overline{1,N}.$
\end{theorem}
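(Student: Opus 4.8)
The plan is to reduce the problem to an integral identity on the whole line and then exploit the strict concavity of the nonlinearities. First I would record the integrability of the solution. Since $\alpha_i^+=0$ and $\alpha_i^-=0$ for all $i$, Lemmas~2.4 and 2.5 give $f_i\in L_1(0,+\infty)$ and $f_i\in L_1(-\infty,0)$, hence $f_i\in L_1(\mathbb{R})$, while Lemmas~2.9 and 2.10 give $G_i(f_i)\in L_1(\mathbb{R})$; together with $f_i\le\xi_i$ (Lemma~2.1) this also yields $(\mu_j(t)-1)G_j(f_j(t))\le G_j(\xi_j)(\mu_j(t)-1)\in L_1(\mathbb{R})$, so $\mu_j G_j(f_j)\in L_1(\mathbb{R})$. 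I would also note that if $f\not\equiv0$ then, because all $K_{ij}(\tau)>0$, every coordinate is strictly positive on $\mathbb{R}$ (as in the proof of Lemma~2.2).

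Next I integrate \eqref{Khachatryan1} over $x\in\mathbb{R}$. By Fubini's theorem (all integrands are nonnegative) and $\int_{-\infty}^\infty K_{ij}(x-t)\,dx=a_{ij}$,
\[
\int_{-\infty}^\infty f_i(x)\,dx=\sum_{j=1}^N a_{ij}\int_{-\infty}^\infty \mu_j(t)G_j(f_j(t))\,dt,\qquad i=\overline{1,N}.
\]
Multiplying by $\eta_i$, summing over $i$, and using the symmetry $a_{ij}=a_{ji}$ together with \eqref{Khachatryan2} in the form $\sum_i\eta_i a_{ij}=\eta_j$, I obtain
\[
\sum_{i=1}^N\eta_i\int_{-\infty}^\infty\bigl(f_i(x)-G_i(f_i(x))\bigr)\,dx=\sum_{j=1}^N\eta_j\int_{-\infty}^\infty(\mu_j(t)-1)G_j(f_j(t))\,dt.
\]
The right-hand side is nonnegative and, since $\mu_j-1>0$ on $\mathbb{R}$ and $G_j(f_j)>0$ whenever $f\not\equiv0$, it is \emph{strictly} positive unless $f\equiv0$.

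It remains to show that the left-hand side is nonpositive, which then forces $f\equiv0$. If $0\le f_i(x)\le\eta_i$ for all $i$ and $x$, this is immediate: strict concavity together with $G_i(0)=0$, $G_i(\eta_i)=\eta_i$ gives $G_i(u)\ge u$ on $[0,\eta_i]$, so each integrand $f_i-G_i(f_i)\le0$; the left-hand side is then $\le0$ while the right-hand side is $>0$, a contradiction. Hence in this regime $f\equiv0$.

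The main obstacle is the complementary regime, in which some coordinate overshoots, i.e.\ $f_i(x)>\eta_i$ on a set that is bounded (by the decay at $\pm\infty$) but nonempty; there $G_i(f_i)<f_i$, the corresponding contribution to the left-hand side is positive, and the sign argument above fails. Moreover a pointwise maximum-principle bound $f_i\le\eta_i$ cannot be expected to hold in general, precisely because $\mu_j>1$ feeds the solution above $\eta$ (indeed the constructed solution of Theorem~3.1 already satisfies $f_i\ge\eta_i$ and is not $\equiv\eta_i$). To dispose of this case I would use that every solution $f\in\Omega$ is dominated by the solution $\widehat f$ built by the monotone iteration \eqref{Khachatryan37}: repeating the induction from the proof of Theorem~3.2 gives $f_i(x)\le f_i^{(n)}(x)$ for all $n$, hence $f_i(x)\le\widehat f_i(x)$ with $\eta_i\le\widehat f_i\le\xi_i$ by \eqref{Khachatryan41}. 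The overshoot of $f$ therefore takes values in $[\eta_i,\xi_i]$, which is exactly the range on which condition~$IV)$ is available, and I expect to control it by a $\varphi$-subhomogeneity estimate in the spirit of Section~3.2, showing that the positive overshoot contribution cannot balance the strictly positive right-hand side of the identity. This last estimate is where the real work lies.
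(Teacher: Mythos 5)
Your reduction to the integral identity is correct as far as it goes: under $\alpha_i^\pm=0$ the inclusions $f_i\in L_1(\mathbb{R})$ and $\mu_i G_i(f_i)\in L_1(\mathbb{R})$ do follow from Lemmas~2.4, 2.5, 2.9, 2.10 together with Lemma~2.1 and condition $b)$, and integrating \eqref{Khachatryan1} against the Perron weights $\eta_i$, using $a_{ij}=a_{ji}$ and \eqref{Khachatryan2}, indeed yields $\sum_i\eta_i\int_{-\infty}^\infty\bigl(f_i-G_i(f_i)\bigr)dx=\sum_j\eta_j\int_{-\infty}^\infty(\mu_j-1)G_j(f_j)\,dt$, whose right-hand side is strictly positive once $f\not\equiv0$ (your positivity step, as in \eqref{Khachatryan64}--\eqref{Khachatryan65}, is also sound). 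But this only settles the regime $0\le f_i\le\eta_i$, and you acknowledge that the overshoot case is ``where the real work lies'' --- that case is the entire substance of the theorem, and your proposal contains no argument for it. So this is a genuine gap, not a finished proof; the identity by itself cannot decide the matter, since it carries no mechanism to weigh the positive contribution of $\{f_i>\eta_i\}$ against the right-hand side.

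Moreover, the repair you sketch is structurally unavailable. A $\varphi$-subhomogeneity contraction in the spirit of Section~3.2 requires a uniform two-sided bound $\sigma\,\widehat f_i(x)\le f_i(x)\le\widehat f_i(x)$ with $\sigma>0$; here $f_i(x)\to0$ at $\pm\infty$ while $\widehat f_i(x)\to\eta_i>0$ (Theorem~3.1), so no such $\sigma$ exists --- and if that machinery did apply it would force $f\equiv\widehat f$, contradicting the zero limits, which is precisely why Theorem~3.2 is confined to the class $\mathfrak{M}$ of \eqref{Khachatryan56}. The paper closes the argument by a different device that avoids any case split on overshoot. Having established, as you do, the domination $f_i\le\widehat f_i$ and the strict positivity $f_i>0$ on $\mathbb{R}$, it tests the nonnegative difference $\widehat f_i(x)-f_i(x)=\sum_j\int_{-\infty}^\infty K_{ij}(x-t)\mu_j(t)\bigl(G_j(\widehat f_j(t))-G_j(f_j(t))\bigr)dt$ against the solution-dependent weight $\mu_i(x)G_i(f_i(x))$ (summable by \eqref{Khachatryan68}), integrates over $\mathbb{R}$, sums in $i$, and uses Fubini plus the symmetry $K_{ij}=K_{ji}$ so that the inner integral collapses via the equation satisfied by $f$ itself. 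The result is the exact identity \eqref{Khachatryan71}, supported on the sets $\Gamma_i=\{x:\,f_i(x)<\widehat f_i(x)\}$, whose integrand is strictly positive there by the chord--slope inequality $\frac{G_i(f_i)}{f_i}>\frac{G_i(\widehat f_i)-G_i(f_i)}{\widehat f_i-f_i}$ of \eqref{Khachatryan72} (strict concavity); since $f$ and $\widehat f$ have different limits at $\pm\infty$, some $\Gamma_{j^*}$ has positive measure, and the identity is contradicted. In short: replacing your constant weight $\eta_i$ by $\mu_i G_i(f_i)$ and comparing $f$ with $\widehat f$ rather than with the constant vector $\eta$ is the missing idea your plan needs.
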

\begin{proof}
Assume the opposite: there exists a nontrivial solution $\tilde{f}(x)=(\tilde{f}_1(x),\ldots,\tilde{f}_N(x))^T\in\Omega$  with the properties  $\lim\limits_{x\rightarrow\pm\infty}\tilde{f}_i(x)=0,$ $i=\overline{1,N}.$ Then, according to Lemmas~2.9 and 2.10
\begin{equation}\label{Khachatryan62}
G_i(\tilde{f}_i)\in L_1(\mathbb{R}),\quad i=\overline{1,N}.
\end{equation}
Let $f(x)=(f_1(x),\ldots,f_N(x))^T$ be the solution of system  \eqref{Khachatryan1} constructed using successive approximations  \eqref{Khachatryan37}. Then, using similar reasoning as in the proof of Theorem~3.2, we can verify the validity of the inequality
\begin{equation}\label{Khachatryan63}
\tilde{f}_i(x)\leq f_i(x),\quad  x\in\mathbb{R},\quad i=\overline{1,N}.
\end{equation}
Since $\tilde{f}\in\Omega,$ $\tilde{f}_i\in C(\mathbb{R}),$ $i=\overline{1,N}$ (see formula  \eqref{Khachatryan15}) and $\tilde{f}$ is a non-trivial solution of system  \eqref{Khachatryan1}, there exist an index $j_0\in\{1,2,\ldots,N\}$ and numbers $x_0\in\mathbb{R},$ $\delta>0$ such that
\begin{equation}\label{Khachatryan64}
\beta:=\inf\limits_{x\in(x_0-\delta,x_0+\delta)}\tilde{f}_{j_0}(x)>0.
\end{equation}
Considering  \eqref{Khachatryan64}, $I), II), 1)$ from \eqref{Khachatryan1} we have
$$\tilde{f}_i(x)\geq \int\limits_{-\infty}^\infty K_{ij_0}(x-t)\mu_{j_0}(t)G_{j_0}(\tilde{f}_{j_0}(t))dt\geq G_{j_0}(\beta)\int\limits_{x_0-\delta}^{x_0+\delta}K_{ij_0}(x-t)dt>0, \ \  x\in\mathbb{R},\ \ i=\overline{1,N}.$$
Thus, we obtain that
\begin{equation}\label{Khachatryan65}
\tilde{f}_i(x)>0, \quad  x\in\mathbb{R},\quad i=\overline{1,N}.
\end{equation}
Since for the solution  $\tilde{f}$ the limit values  $\alpha_i^\pm=0,$ $i=\overline{1,N},$ then according to Theorem~3.1 $\tilde{f}(x)\not\equiv f(x),$ $x\in\mathbb{R}.$ Therefore, taking into account the continuity on  $\mathbb{R}$ of the vector functions  $f$ and $\tilde{f}$ we can assert that there exist  $j^*\in\{1,2,\ldots,N\},$ $x^*\in\mathbb{R}$ and $\delta^*>0$ such that
\begin{equation}\label{Khachatryan66}
\tilde{f}_{j^*}(x)\neq f_{j^*}(x),\quad x\in(x^*-\delta^*,x^*+\delta^*).
\end{equation}
Considering   \eqref{Khachatryan63} from \eqref{Khachatryan66} we arrive at the strict inequality
\begin{equation}\label{Khachatryan67}
\tilde{f}_{j^*}(x)< f_{j^*}(x),\quad x\in(x^*-\delta^*,x^*+\delta^*).
\end{equation}
We denote by  $\Gamma_j$ the following measurable sets:
$$\Gamma_j:=\{x\in\mathbb{R}: \tilde{f}_j(x)<f_j(x)\},\quad j=\overline{1,N}.$$
Obviously, $\Gamma_{j^*}\neq\emptyset$ and $mes \Gamma_{j^*}>0,$ since $(x^*-\delta^*, x^*+\delta^*)\subset\Gamma_{j^*}.$

From the obvious inequalities
$0\leq \mu_i(x)G_i(\tilde{f}_i(x))\leq (\mu_i(x)-1)G_i(\xi_i)+G_i(\tilde{f}_i(x)),$ $x\in\mathbb{R},$ $i=\overline{1,N},$ due to  \eqref{Khachatryan62} and conditions  $a), b)$ it follows that
\begin{equation}\label{Khachatryan68}
\mu_i(x)G_i(\tilde{f}_i(x))\in L_1(\mathbb{R}),\quad i=\overline{1,N}.
\end{equation}
Consider the following difference, taking into account \eqref{Khachatryan1} and \eqref{Khachatryan63}:
\begin{equation}\label{Khachatryan69}
0\leq f_i(x)-\tilde{f}_i(x)=\sum\limits_{j=1}^N\int\limits_{-\infty}^\infty K_{ij}(x-t)\mu_j(t)(G_j(f_j(t))-G_j(\tilde{f}_j(t)))dt,\ \  x\in\mathbb{R},\ \ i=\overline{1,N}.
\end{equation}
We multiply both parts of  \eqref{Khachatryan69} by functions  $\mu_i(x)G_i(\tilde{f}_i(x)),$ $i=\overline{1,N}.$ Then, considering  \eqref{Khachatryan68} and conditions  $1), 2), a), b), I)$ and $II),$ as well as Lemma~2.1, we integrate the obtained
inequality over $x$ on $\mathbb{R}$ and sum over all $i=\overline{1,N}.$ As a result, using Fubini's theorem and condition $1),$ we have
$$0\leq \sum\limits_{i=1}^N\int\limits_{-\infty}^\infty \mu_{i}(x)G_i(\tilde{f}_i(x))(f_i(x)-\tilde{f}_i(x))dx=$$
$$= \sum\limits_{i=1}^N\int\limits_{-\infty}^\infty \mu_{i}(x)G_i(\tilde{f}_i(x))\sum\limits_{j=1}^N\int\limits_{-\infty}^\infty K_{ij}(x-t)\mu_j(t)(G_j(f_j(t))-G_j(\tilde{f}_j(t)))dtdx=$$
$$= \sum\limits_{j=1}^N\int\limits_{-\infty}^\infty \mu_{j}(t)(G_j(f_j(t))-G_j(\tilde{f}_j(t)))\sum\limits_{i=1}^N \int\limits_{-\infty}^\infty K_{ij}(t-x)
\mu_i(x)G_i(\tilde{f}_i(x))dxdt=$$
$$=\sum\limits_{j=1}^N\int\limits_{-\infty}^\infty \mu_{j}(t)(G_j(f_j(t))-G_j(\tilde{f}_j(t)))\sum\limits_{i=1}^N \int\limits_{-\infty}^\infty K_{ji}(t-x)\mu_i(x)G_i(\tilde{f}_i(x))dxdt=$$
$$=\sum\limits_{j=1}^N\int\limits_{-\infty}^\infty \mu_{j}(t)(G_j(f_j(t))-G_j(\tilde{f}_j(t)))\tilde{f}_j(x)dx$$
or by virtue of \eqref{Khachatryan65}
\begin{equation}\label{Khachatryan70}
\sum\limits_{i=1}^N\int\limits_{-\infty}^\infty \mu_i(x)\tilde{f}_i(x)\left(\frac{G_i(\tilde{f}_i(x))}{\tilde{f}_i(x)}
(f_i(x)-\tilde{f}_i(x))-(G_i(f_i(x))-G_i(\tilde{f}_i(x))) \right)dx=0.
\end{equation}
Since $\mathbb{R}\backslash\Gamma_i= \{x\in\mathbb{R}: f_i(x)=\tilde{f}_i(x)\},$ $i=\overline{1,N},$ then \eqref{Khachatryan70} can be rewritten in the following form
\begin{equation}\label{Khachatryan71}
\sum\limits_{i=1}^N\int\limits_{\Gamma_i} \mu_i(x)\tilde{f}_i(x)(f_i(x)-\tilde{f}_i(x)) \left(\frac{G_i(\tilde{f}_i(x))}{\tilde{f}_i(x)}
-\frac{G_i(f_i(x))-G_i(\tilde{f}_i(x))}{f_i(x)-\tilde{f}_i(x)}\right)dx=0.
\end{equation}
From the definition of sets  $\Gamma_j,$ $j=\overline{1,N},$ by virtue of conditions  $I)-III)$ we can state that  (see Fig. 3)
\begin{equation}\label{Khachatryan72}
\frac{G_j(\tilde{f}_j(x))}{\tilde{f}_j(x)}
>\frac{G_j(f_j(x))-G_j(\tilde{f}_j(x))}{f_j(x)-\tilde{f}_j(x)}, \quad x\in \Gamma_j,\quad j=\overline{1,N}.
\end{equation}

\begin{figure}[!h]
  \centering
  % Requires \usepackage{graphicx}
  \includegraphics[width=4in]{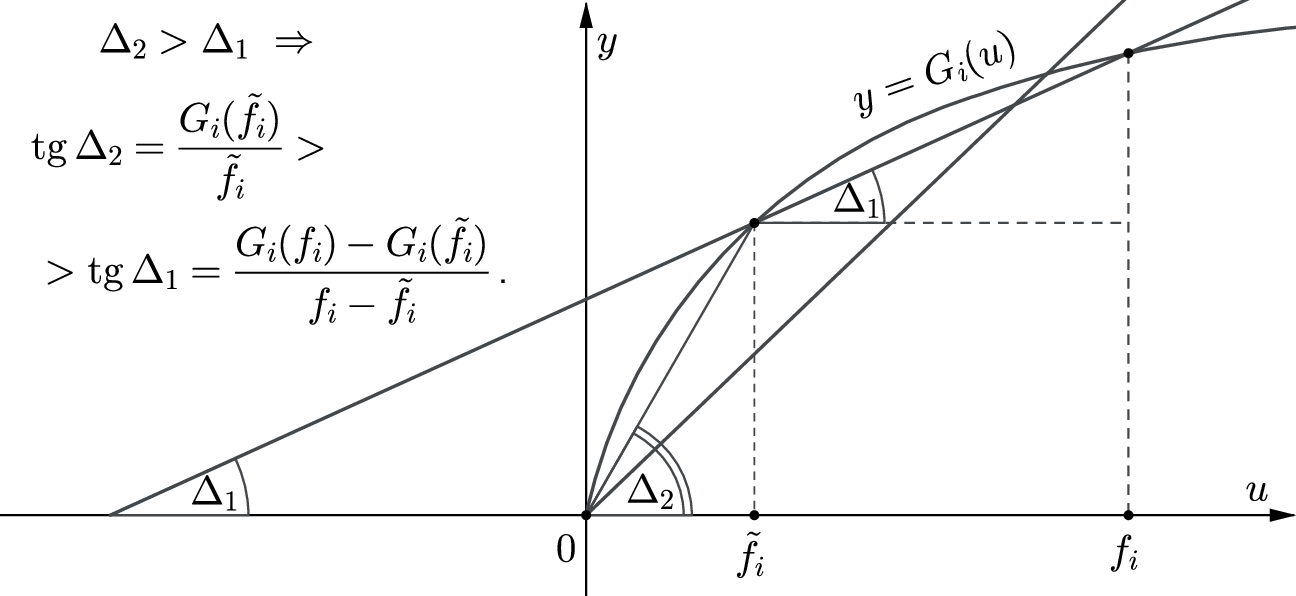}\\
  \caption{Intersection of the graph of function $y=G_i(u)$ with a line passing through points $(f_i,G_i(f_i))$ and $(\tilde{f}_i,G_i(\tilde{f}_i))$.}
\end{figure}

Considering \eqref{Khachatryan65}, \eqref{Khachatryan72}, condition  $a)$ and the definition of sets $\Gamma_j,$ $j=\overline{1,N}$ from \eqref{Khachatryan71} we obtain
$$\int\limits_{\Gamma_{j^*}} \mu_{j^*}(x)\tilde{f}_{j^*}(x)(f_{j^*}(x)-\tilde{f}_{j^*}(x)) \left(\frac{G_{j^*}(\tilde{f}_{j^*}(x))}{\tilde{f}_{j^*}(x)}
-\frac{G_{j^*}(f_{j^*}(x))-G_{j^*}(\tilde{f}_{j^*}(x))}{f_{j^*}(x)-\tilde{f}_{j^*}(x)}\right)dx\leq0.$$

However, the last inequality is impossible since  \eqref{Khachatryan72} holds for  $i=j^*,$ $\mu_{j^*}(x)>1,$ $f_{j^*}(x)>\tilde{f}_{j^*}(x)>0,$ $x\in \Gamma_{j^*}$ and $mes \Gamma_{j^*}>0.$ From the resulting contradiction, we come to the completion of the proof of the formulated theorem.
\end{proof}
\subsection{Possible absence of non-negative solutions of system  \eqref{Khachatryan1} in cases $\alpha_i^-=0,$ $\alpha_i^+>0,$ $i=\overline{1,N}$ and $\alpha_i^->0,$ $\alpha_i^+=0,$ $i=\overline{1,N}.$}
We note immediately that in cases  $\alpha_i^-=0,$ $\alpha_i^+>0,$ $i=\overline{1,N}$ and $\alpha_i^->0,$ $\alpha_i^+=0,$ $i=\overline{1,N}$ the question of the existence or absence of a solution for problem \eqref{Khachatryan1}, \eqref{Khachatryan5} in the class of functions  $\Omega,$ when conditions  $1), 2), a), b), I)-IV),$ are satisfied, remains an open problem. However, we were able to prove that in the indicated cases, problem \eqref{Khachatryan1}, \eqref{Khachatryan5} cannot have monotone solutions in the class  $\Omega.$

More precisely, the following theorems are valid.
\begin{theorem}
Let conditions  $1), 2), a), b), I)-III)$ are fulfilled and $\alpha_i^-=0,$ $\alpha_i^+>0,$ $i=\overline{1,N}.$ Then problem  \eqref{Khachatryan1}, \eqref{Khachatryan5} cannot have a monotonically non-decreasing solution from the class  $\mathbb{R}$ on the set  $\Omega.$
\end{theorem}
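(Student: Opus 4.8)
The plan is to argue by contradiction through a conservation law that exploits the evenness of the kernel (its vanishing first moment) together with the strict inequality $\mu_j>1$. Suppose $f=(f_1,\ldots,f_N)^T\in\Omega$ is a monotonically non-decreasing solution of \eqref{Khachatryan1}, \eqref{Khachatryan5} with $\alpha_i^-=0$, $\alpha_i^+>0$, $i=\overline{1,N}$. By Lemma~2.7 we then have $0\le f_i(x)\le\eta_i$, with $f_i(x)\to\eta_i$ as $x\to+\infty$ and $f_i(x)\to0$ as $x\to-\infty$, and by \eqref{Khachatryan15} each $f_i\in C(\mathbb R)$. Writing $g_i:=G_i(f_i)$ and $Q_j:=\mu_j g_j$, conditions II), b) and concavity give $Q_j(x)\to\eta_j$ as $x\to+\infty$ and $Q_j(x)\to0$ as $x\to-\infty$.

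First I would collect the integrability facts needed to make the conservation law rigorous. On the left half-line Lemma~2.5 gives $f_i\in L_1(-\infty,0)$ and Lemma~2.10 gives $g_i\in L_1(-\infty,0)$, so $g_i-f_i\in L_1(-\infty,0)$; moreover $(\mu_i-1)g_i\in L_1(\mathbb R)$ by condition b), since $0\le g_i\le\eta_i$. The key remaining piece is the right-tail estimate $\eta_i-f_i\in L_1(0,+\infty)$, which I would obtain exactly as in the proof of Lemma~2.4 but on $(r,+\infty)$ with the roles reversed: using $A\eta=\eta$ and $f_j\le\eta_j$ one gets $0\le\eta_i-f_i(x)\le\sum_j\int K_{ij}(x-t)(\eta_j-g_j(t))\,dt$; integrating over $(r,r_0)$, applying Fubini and the bounds for $I_1^i,I_2^i,I_3^i$ from Lemma~2.4 (which only use the finite first moments of condition 2)), then weighting by $\eta_i$ and summing yields $\sum_i\eta_i\int_r^{r_0}(g_i-f_i)\,dx\le 3\sum_{i,j}\eta_i\eta_j\int_0^\infty\tau K_{ij}(\tau)\,d\tau$. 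Since $f_i>\eta_i/2$ for large $x$, the chord bound for concave $G_i$ already used for \eqref{Khachatryan32} gives $g_i-f_i\ge\frac{G_i(\eta_i/2)-\eta_i/2}{\eta_i/2}(\eta_i-f_i)$, so letting $r_0\to+\infty$ delivers $\eta_i-f_i\in L_1(0,+\infty)$. Consequently the whole integrand $\sum_i\eta_i\big[(\mu_i-1)g_i+(g_i-f_i)\big]$ belongs to $L_1(\mathbb R)$.

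The heart of the argument is the conservation law. Put $W_j(\tau):=\sum_i\eta_i K_{ij}(\tau)$; by condition 1) each $W_j$ is even, and by the symmetry of $A$ together with $A\eta=\eta$ one has $\int_{\mathbb R}W_j=\sum_i\eta_i a_{ij}=\eta_j$. Multiplying \eqref{Khachatryan1} by $\eta_i$ and summing gives $\sum_i\eta_i f_i(x)=\sum_j(W_j*Q_j)(x)$, whence $\sum_j\eta_jQ_j(x)-\sum_i\eta_i f_i(x)=\sum_j\int W_j(x-t)\big(Q_j(x)-Q_j(t)\big)\,dt$. Integrating over $x\in\mathbb R$ and substituting $s=x-t$, the inner integral equals $\int_{\mathbb R}\big(Q_j(x)-Q_j(x-s)\big)\,dx=\eta_j s$ (this uses precisely that $Q_j$ approaches its limits $0$ and $\eta_j$ in the $L_1$ sense established above), while Fubini is justified because $\int_{\mathbb R}|Q_j(x)-Q_j(x-s)|\,dx\le\eta_j|s|+C_j$ with $C_j<\infty$, which is $W_j$-integrable by the finite first moment of condition 2). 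Since each $W_j$ is even, $\int_{\mathbb R}sW_j(s)\,ds=0$, and therefore $\int_{\mathbb R}\Big[\sum_i\eta_i\big((\mu_i-1)g_i+(g_i-f_i)\big)\Big]\,dx=\sum_j\eta_j\int_{\mathbb R}sW_j(s)\,ds=0$.

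Finally I would read off the contradiction. In this last integrand both summands are nonnegative: $(\mu_i-1)g_i\ge0$ by condition a), and $g_i-f_i=G_i(f_i)-f_i\ge0$ because $G_i$ is concave with $G_i(0)=0$, $G_i(\eta_i)=\eta_i$ and $0\le f_i\le\eta_i$, so its graph lies above the diagonal chord. Moreover, since $f_i(x)\to\eta_i>0$, we have $f_i>0$, hence $g_i>0$ and $(\mu_i-1)g_i>0$, on a half-line of infinite measure, so the integral is strictly positive. This contradicts the value $0$ just obtained, and therefore no monotonically non-decreasing solution can exist in $\Omega$. The main obstacle is the right-tail integrability $\eta_i-f_i\in L_1(0,+\infty)$ and the attendant justification of Fubini in the conservation law; once these are secured, the sign analysis is immediate.
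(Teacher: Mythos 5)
Your proof is correct, but it takes a genuinely different route from the paper's. The paper argues by contradiction with the same preparatory facts you use (Lemma~2.7 giving $0\le f_i\le\eta_i$ and $\lim_{x\to+\infty}f_i(x)=\eta_i$; Lemma~2.5 giving $f_i\in L_1(-\infty,0)$; a Lemma~2.6-type argument giving $\eta_i-f_i\in L_1(0,+\infty)$, which you correctly identify as the main technical hurdle and rederive explicitly via the mirrored version of \eqref{Khachatryan24} and the chord bound from \eqref{Khachatryan32}), but its core mechanism is a nonlinear weighted-duality argument: it multiplies \eqref{Khachatryan1} by the solution-dependent test function $(\eta_i-G_i(f_i(x)))\mu_i(x)$, integrates, and uses the symmetry $K_{ij}=K_{ji}$, Fubini, and the strict inequality $\mu_i>1$ to produce the strict inequality \eqref{Khachatryan82}; the contradiction then comes from the concavity chord-slope comparison $\frac{G_i(u)}{u}>\frac{\eta_i-G_i(u)}{\eta_i-u}$ on the sets $\mathcal{D}_i=\{x:f_i(x)<\eta_i\}$ of infinite measure. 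You instead test with the constant Perron weight $\eta_i$ and exploit a first-moment conservation law: with $W_j=\sum_i\eta_iK_{ij}$ even and $\int_{\mathbb{R}}W_j=\sum_i a_{ji}\eta_i=\eta_j$, the translation identity $\int_{\mathbb{R}}\bigl(Q_j(x)-Q_j(x-s)\bigr)dx=\eta_j s$ and the oddness of $sW_j(s)$ force $\int_{\mathbb{R}}\sum_i\eta_i\bigl[(\mu_i-1)G_i(f_i)+(G_i(f_i)-f_i)\bigr]dx=0$, while condition $a)$ and $f_i>\eta_i/2$ on a half-line make the integrand strictly positive there. In your argument concavity enters only through $G_i(f_i)\ge f_i$ on $[0,\eta_i]$ and the tail estimates, whereas in the paper it carries the final contradiction. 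Your $L_1$ and Fubini justifications all check out: $Q_j-\eta_jH\in L_1(\mathbb{R})$ for the Heaviside function $H$ (from $(\mu_j-1)G_j(f_j)\in L_1(\mathbb{R})$, $G_j(f_j)\in L_1(-\infty,0)$ by Lemma~2.10, and $0\le\eta_j-G_j(f_j)\le\eta_j-f_j\in L_1(0,+\infty)$), which gives both the bound $\int|Q_j(x)-Q_j(x-s)|dx\le\eta_j|s|+C_j$, dominated by $W_j$ thanks to the finite first moments in condition~2), and the exact value $\eta_js$ of the inner integral. What each approach buys: yours makes structurally transparent why the evenness of the kernel (zero first moment) together with $\mu_i>1$ forbids monotone front-type solutions — a mechanism familiar from critical traveling-wave arguments — and uses the first-moment hypothesis in an essential way beyond mere integrability; the paper's route needs no moment cancellation and reuses the duality machinery of its Theorem~4.1 almost verbatim, so it is shorter given what precedes it. The proposal is complete as it stands.
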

\begin{theorem}
Let conditions  $1), 2), a), b), I)-III)$ be satisfied and $\alpha_i^->0,$ $\alpha_i^+=0,$ $i=\overline{1,N}.$ Then problem \eqref{Khachatryan1}, \eqref{Khachatryan5} cannot have a monotonically non-increasing solution from the class  $\mathbb{R}$ on the set  $\Omega.$
\end{theorem}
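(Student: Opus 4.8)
The plan is to argue by contradiction, turning the hypothetical monotone solution into a pointwise convolution identity and then extracting a sign contradiction from the symmetry/evenness of the kernel (condition $1$)) together with the strict positivity forced by the singular factors $\mu_j>1$. Note first that Theorem~4.2 is the mirror image of Theorem~4.1 under the reflection $x\mapsto -x$; I therefore describe the argument directly for the non-increasing case, the non-decreasing case being verbatim the same after reflection. Suppose, contrary to the claim, that problem \eqref{Khachatryan1}, \eqref{Khachatryan5} has a monotonically non-increasing solution $f\in\Omega$. By Lemma~2.8 it satisfies $f_i(x)\le\eta_i$ on $\mathbb{R}$, $\lim_{x\to-\infty}f_i(x)=\eta_i$ and $\lim_{x\to+\infty}f_i(x)=\alpha_i^+=0$. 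Set $F_i:=\eta_i-f_i\ge0$; then $F_i$ is bounded, monotonically non-decreasing, with $\lim_{x\to-\infty}F_i(x)=0$ and $\lim_{x\to+\infty}F_i(x)=\eta_i$.

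Rewriting \eqref{Khachatryan2} as $\eta_i=\sum_{j=1}^N\int_{-\infty}^\infty K_{ij}(x-t)\eta_j\,dt$ and subtracting \eqref{Khachatryan1}, I obtain the exact identity
$$F_i(x)=\sum_{j=1}^N (K_{ij}*g_j)(x)-\sum_{j=1}^N (K_{ij}*h_j)(x),\quad g_j:=\eta_j-G_j(f_j)\ge0,\quad h_j:=(\mu_j-1)G_j(f_j)\ge0.$$
By strict concavity together with $G_j(0)=0$ and $G_j(\eta_j)=\eta_j$, the quotient $G_j(u)/u$ is non-increasing, so $G_j(u)\ge u$ on $[0,\eta_j]$; hence $w_j:=G_j(f_j)-f_j\ge0$ and $g_j=F_j-w_j$. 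Substituting yields
$$F_i(x)-\sum_{j=1}^N (K_{ij}*F_j)(x)=-\sum_{j=1}^N \bigl(K_{ij}*(w_j+h_j)\bigr)(x).$$
Multiplying by $\eta_i$, summing over $i$, and setting $\kappa_j:=\sum_{i=1}^N\eta_i K_{ij}$ and $\Psi:=\sum_{i,j=1}^N\eta_i\bigl(K_{ij}*(w_j+h_j)\bigr)$, I reach, using $\int_{\mathbb R}\kappa_j=\sum_i\eta_i a_{ij}=\eta_j$ (by symmetry $a_{ij}=a_{ji}$ and \eqref{Khachatryan2}),
$$\sum_{j=1}^N\int_{-\infty}^\infty \kappa_j(s)\bigl(F_j(x)-F_j(x-s)\bigr)\,ds=-\Psi(x).$$

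The crux of the proof is to show that the left-hand side lies in $L_1(\mathbb{R})$ with \emph{zero} integral. Since each $F_j$ is monotone with total variation $\eta_j$, one has $\int_{\mathbb R}\lvert F_j(x)-F_j(x-s)\rvert\,dx=\lvert s\rvert\,\eta_j$, while condition $2)$ gives $\int_{\mathbb R}\lvert s\rvert\,\kappa_j(s)\,ds=\sum_i\eta_i\int_{\mathbb R}\lvert s\rvert K_{ij}(s)\,ds<+\infty$. Thus Fubini's theorem applies, the left-hand side is summable, and since $\int_{\mathbb R}\bigl(F_j(x)-F_j(x-s)\bigr)\,dx=s\,(\eta_j-0)=s\eta_j$, its integral equals $\sum_j\eta_j\int_{\mathbb R}s\,\kappa_j(s)\,ds=0$, because each $\kappa_j$ is even by condition $1)$. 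This is exactly the step where both structural hypotheses are indispensable: evenness kills the first moment $\int s\kappa_j(s)\,ds$, and the finite first absolute moment plus the bounded variation of $F_j$ legitimize the interchange of integrals. I expect this to be the main technical obstacle, the rest being routine.

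Consequently $\int_{\mathbb R}\Psi(x)\,dx=0$. On the other hand $w_j,h_j\ge0$ are bounded, $h_j\in L_1(\mathbb{R})$ (as $\mu_j-1\in L_1^0(\mathbb{R})$ and $G_j(f_j)$ is bounded), so each convolution is finite and $\Psi$ is continuous and non-negative; moreover $f_j(t)\to\eta_j>0$ as $t\to-\infty$ forces $h_j>0$ on a half-line, whence $(K_{ij}*h_j)(x)>0$ for every $x$ by the strict positivity of $K_{ij}$, giving $\Psi(x)>0$ for all $x\in\mathbb{R}$. A strictly positive continuous function cannot have vanishing integral, and this contradiction shows that no monotonically non-increasing solution of \eqref{Khachatryan1}, \eqref{Khachatryan5} exists in $\Omega$. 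The same scheme, applied to $F_i=\eta_i-f_i$ with the limits interchanged (via Lemma~2.7), proves Theorem~4.1.
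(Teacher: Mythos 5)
Your proof is correct, but it takes a genuinely different route from the paper's. The paper (which proves the mirror statement, Theorem~4.2, and notes Theorem~4.3 is analogous) tests equation \eqref{Khachatryan1} against the nonlinear weight $(\eta_i-G_i(f_i(x)))\mu_i(x)$, integrates over $\mathbb{R}$, swaps integrals via Fubini and the symmetry $K_{ij}(x-t)=K_{ji}(t-x)$, extracts a strict inequality from $\mu_i>1$, and lands in a contradiction with the chord--slope concavity inequality $\frac{G_i(f_i)}{f_i}>\frac{\eta_i-G_i(f_i)}{\eta_i-f_i}$ on the sets $\mathcal{D}_i=\{f_i<\eta_i\}$; to legitimize these integrations it leans on the half-line summability machinery (Lemmas~2.4/2.5, the inclusion $\eta_i-f_i\in L_1(0,+\infty)$, and $(\eta_i-G_i(f_i))\mu_i\in L_1$). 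You instead subtract the Perron identity, test with the \emph{constant} weight $\eta_i$, and reduce the whole problem to a first-moment computation: for monotone $F_j=\eta_j-f_j$ with limits $0$ and $\eta_j$ one has $\int_{\mathbb{R}}(F_j(x)-F_j(x-s))\,dx=s\eta_j$, the even kernel $\kappa_j$ has vanishing first moment (finite by condition $2)$), so the strictly positive remainder $\Psi$ --- whose positivity comes from $\mu_j>1$ on a half-line where $f_j$ is bounded away from $0$, via Lemma~2.8 --- must integrate to zero, a contradiction. What this buys: you bypass Lemmas~2.4, 2.5 and the $L_1$-inclusion arguments entirely (monotonicity plus the boundary limits replace them), you use concavity only through the elementary chord bound $G_j(u)\geq u$ on $[0,\eta_j]$, and the roles of the structural hypotheses (evenness, finite first moment, $\mu_j>1$) become transparent; the paper's pairing argument, by contrast, is the one that generalizes to the non-monotone setting of Theorem~4.1, where your bounded-variation computation is unavailable. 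One small inaccuracy worth fixing: $h_j=(\mu_j-1)G_j(f_j)$ need \emph{not} be bounded, since $\mu_j$ may be locally unbounded (that is precisely the singularity, cf.\ example $b_1)$); but this is harmless, because $h_j\in L_1(\mathbb{R})$ and $K_{ij}\in L_\infty(\mathbb{R})$ already make $K_{ij}*h_j$ finite, bounded, and continuous, which is all your argument uses --- and continuity of $\Psi$ is itself dispensable, since $\Psi>0$ a.e.\ with $\int_{\mathbb{R}}\Psi\,dx=0$ is already absurd.
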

Let us prove Theorem~4.2. The proof of Theorem~4.3 is carried out in a similar way.
\begin{proof}
Assume the opposite: under the conditions  $\alpha_i^-=0,$ $\alpha_i^+>0,$ $i=\overline{1,N}$ problem \eqref{Khachatryan1}, \eqref{Khachatryan5} has a monotonically non-decreasing solution $f\in\Omega.$ Then it is clear that  $f_i\in C(\mathbb{R}),$ $i=\overline{1,N}$ (see \eqref{Khachatryan15}). From Lemma~2.7 and the definition of the class  $\Omega$ it immediately follows that
\begin{equation}\label{Khachatryan73}
0\leq f_i(x)\leq \eta_i, \quad x\in\mathbb{R},\quad i=\overline{1,N}.
\end{equation}
\begin{equation}\label{Khachatryan74}
\lim\limits_{x\rightarrow+\infty}f_i(x)=\eta_i,\quad i=\overline{1,N}.
\end{equation}
Therefore, there exists a number  $r>0$ such that for  $x>r$ we have
\begin{equation}\label{Khachatryan75}
f_i(x)>\frac{\eta_i}{2},\quad i=\overline{1,N}.
\end{equation}
Then from  \eqref{Khachatryan1} taking into account  \eqref{Khachatryan75}, $1), a), I)$ and $II)$ we have
$$f_i(x)\geq \sum\limits_{j=1}^N\int\limits_{r}^\infty K_{ij}(x-t)\mu_j(t)G_j(f_j(t))dt\geq$$
$$\geq \sum\limits_{j=1}^N\ G_j(\frac{\eta_j}{2})\int\limits_{r}^\infty K_{ij}(x-t)\mu_j(t)dt>0,\ \  x\in\mathbb{R},\ \ i=\overline{1,N}.$$
Therefore, we obtain that
\begin{equation}\label{Khachatryan76}
f_i(x)>0, \quad x\in\mathbb{R},\quad i=\overline{1,N}.
\end{equation}
On the other hand, since  $\alpha_i^-=0,$ $i=\overline{1,N}$ and $f\in\Omega,$ there exists  $r_0>0$ such that $x<-r_0$
\begin{equation}\label{Khachatryan77}
f_i(x)<\frac{\eta_i}{2},\quad i=\overline{1,N}.
\end{equation}
Thus, we obtain an important two-sided inequality for further reasoning:
\begin{equation}\label{Khachatryan78}
0<f_i(x)<\frac{\eta_i}{2},\quad x\in(-\infty,-r_0),\quad i=\overline{1,N}.
\end{equation}
Since $\alpha_i^-=0,$ then according to Lemma~2.5
\begin{equation}\label{Khachatryan79}
f_i\in L_1(-\infty,0),\quad i=\overline{1,N}.
\end{equation}
Next, using the same reasoning as in the proof of Lemma~2.6, we can verify that
\begin{equation}\label{Khachatryan80}
\eta_i-f_i\in L_1(0,+\infty),\quad i=\overline{1,N}.
\end{equation}
From \eqref{Khachatryan76}, \eqref{Khachatryan73}, $a)$ and $I)-III)$ it  follows that
$$0\leq (\eta_i-G_i(f_i(x)))\mu_i(x)\leq \eta_i(\mu_i(x)-1)+\eta_i-f_i(x),\,\, x\in\mathbb{R},\quad i=\overline{1,N}.$$
From the last inequality, condition  $b)$ and inclusion  \eqref{Khachatryan80} it follows that
\begin{equation}\label{Khachatryan81}
(\eta_i-G_i(f_i(x)))\mu_i(x)\in L_1(0,+\infty),\quad i=\overline{1,N}.
\end{equation}

Multiply both parts of \eqref{Khachatryan1} by the function $(\eta_i-G_i(f_i(x)))\mu_i(x)$,  considering \eqref{Khachatryan79}, \eqref{Khachatryan81}, \eqref{Khachatryan2}, \eqref{Khachatryan6}, \eqref{Khachatryan73}, $1), 2), a), b), I)$ and $II),$  and integrate with respect to $x$ from $-\infty$ to $+\infty.$ Taking into account conditions $1), 2), a), b),$ according to Fubini’s theorem, we will have
$$\sum\limits_{i=1}^N \int\limits_{-\infty}^\infty (\eta_i-G_i(f_i(x)))\mu_i(x) f_i(x)dx=$$
$$=\sum\limits_{i=1}^N \int\limits_{-\infty}^\infty (\eta_i-G_i(f_i(x)))\mu_i(x)\sum\limits_{j=1}^N \int\limits_{-\infty}^\infty K_{ij}(x-t)\mu_j(t)G_j(f_j(t))dtdx= $$
$$=\sum\limits_{j=1}^N \int\limits_{-\infty}^\infty\mu_j(t)G_j(f_j(t))\sum\limits_{i=1}^N \int\limits_{-\infty}^\infty K_{ji}(t-x)(\eta_i\mu_i(x)-\mu_i(x)G_i(f_i(x)))dxdt> $$
$$>\sum\limits_{j=1}^N \int\limits_{-\infty}^\infty\mu_j(t)G_j(f_j(t))\left(\sum\limits_{i=1}^N a_{ji}\eta_i-f_j(t)\right)dt= \sum\limits_{j=1}^N \int\limits_{-\infty}^\infty\mu_j(t)G_j(f_j(t))(\eta_j-f_j(t))dt$$
or which is the same in virtue of  \eqref{Khachatryan76}
\begin{equation}\label{Khachatryan82}
\sum\limits_{i=1}^N \int\limits_{-\infty}^\infty  \mu_i(x)f_i(x)\left(\eta_i-G_i(f_i(x))-\frac{G_i(f_i(x))}{f_i(x)}(\eta_i-f_i(x))\right)dx>0.
\end{equation}
Consider the sets:
$$\mathcal{D}_i:=\{x\in\mathbb{R}: f_i(x)<\eta_i\},\quad i=\overline{1,N}.$$
By \eqref{Khachatryan78}  $\mathcal{D}_i=\emptyset$ and $mes \mathcal{D}_i=+\infty,$ $i=\overline{1,N}.$

It is clear that inequality  \eqref{Khachatryan82} can be rewritten in the form:
$$\sum\limits_{i=1}^N \int\limits_{\mathcal{D}_i}  \mu_i(x)f_i(x)(\eta_i-f_i(x) \left(\frac{\eta_i-G_i(f_i(x))}{\eta_i-f_i(x)}- \frac{G_i(f_i(x))}{f_i(x)}\right)dx>0,$$
since $\mathcal{D}_i^c:=\{x\in\mathbb{R}: f_i(x)=\eta_i\},\quad i=\overline{1,N}.$
 The last inequality is impossible since
$$\mu_i(x)>1,\quad x\in\mathbb{R},\quad f_i(x)>0,\quad x\in\mathbb{R},\quad i=\overline{1,N},$$
$$f_i(x)<\eta_i,\quad x\in\mathcal{D}_i,\quad i=\overline{1,N}$$
and
$$\frac{G_i(f_i(x))}{f_i(x)}>\frac{\eta_i-G_i(f_i(x))}{\eta_i-f_i(x)},\quad x\in\mathcal{D}_i,\quad i=\overline{1,N}.$$
From the obtained contradiction, we come to the end of the proof of the theorem.
\end{proof}
\subsection{Examples.}
At the end of the paper, we give several illustrative examples of the matrix kernel $K(\tau)=(K_{ij}(\tau))_{i,j=1}^{N\times N},$ functions $\mu_j(t),$ $j=\overline{1,N}$ and nonlinearities  $G_j(u),$ $j=\overline{1,N}.$

\textbf{Examples of the matrix kernel  $K.$}
\begin{enumerate}
  \item [$a_1)$] $K_{ij}(\tau)=\frac{a_{ij}}{\sqrt{\pi}}e^{-\tau^2},$ $\tau\in\mathbb{R},$ where $a_{ij}=a_{ji}>0,$ $i,j=\overline{1,N}$ and the spectral radius of the matrix  $A=(a_{ij})_{i,j=1}^{N\times N}$ is equal to one,
  \item [$a_2)$] $K_{ij}(\tau)=\int\limits_a^b e^{-|\tau|s}L_{ij}(s)ds,$ $\tau\in \mathbb{R},$ where $L_{ij}(s)=L_{ji}(s)>0$ are measurable functions on  $[a,b),$ $0<a<b\leq +\infty,$ and $\int\limits_a^b \frac{L_{ij}(s)}{s}ds<+\infty,$ $i,j=\overline{1,N}$ and the spectral radius of the matrix   $A=\left(\int\limits_a^b \frac{L_{ij}(s)}{s}ds\right)_{i,j=1}^{N\times N}$ is unit.
\end{enumerate}

\textbf{Examples of functions  $\mu_j(t),\ j=\overline{1,N}.$}
\begin{enumerate}
  \item [$b_1)$] $\mu_j(t)=1+\varepsilon_j\dfrac{e^{-|t|}}{\sqrt{|t|}},$ $t\in\mathbb{R},$ $\varepsilon_j>0$ --- are parameters  $j=\overline{1,N},$
  \item [$b_2)$] $\mu_j(t)=1+\dfrac{\varepsilon_j}{(1+t^2)|t|^\alpha},$ $t\in\mathbb{R},$ $\varepsilon_j>0$ ---  are parameters  $j=\overline{1,N},$ and $\alpha\in(0,1).$
\end{enumerate}
Let us now turn to examples of nonlinearities  $G_i(u),$ $i=\overline{1,N}:$

\textbf{Examples of nonlinearities  $G_j(u),$ $j=\overline{1,N}.$}
\begin{enumerate}
  \item [$c_1)$] $G_j(u)=u^\alpha\eta_j^{1-\alpha},$ $u\in[0,+\infty),$ $\alpha\in(0,1)$ --- is a parameter, $j=\overline{1,N},$
  \item [$c_2)$] $G_j(u)=\frac{1}{2}(\sqrt{u\eta_j}+u^\alpha\eta_j^{1-\alpha}),$ $u\in[0,+\infty),$ $\alpha\in(0,1),$ $j=\overline{1,N},$
  \item [$c_3)$] $G_j(u)=\frac{1}{2}(u^\beta\eta_j^{1-\beta}+u^\alpha\eta_j^{1-\alpha}),$ $u\in[0,+\infty),$ $\alpha,\beta\in(0,1)$ --- are the parameters, $j=\overline{1,N},$
  \item [$c_4)$] $G_j(u)=\gamma_j(1-e^{-u^\alpha\eta_j^{1-\alpha}}),$ $u\in[0,+\infty),$ $\alpha\in(0,1),$ $\gamma_j:=\frac{\eta_j}{1-e^{-\eta_j}},$ $j=\overline{1,N}.$
\end{enumerate}
Consider the examples  $c_2)$ and $c_4).$

First, let us check the fulfillment of conditions  $I)-IV)$ for example  $c_2).$ First, it is obvious that  $G_j(0)=0,$ $G_j(\eta_j)=\eta_j,$ $G_j\in C[0,+\infty),$ $j=\overline{1,N}.$ Since $$G_j^\prime(u)=\frac{1}{2}\left( \frac{\sqrt{\eta_j}}{2\sqrt{u}}+\alpha\eta_j^{1-\alpha}u^{\alpha-1}\right)>0,$$ for $u>0,$ $j=\overline{1,N},$
 and $G_j^{\prime\prime}(u)=\frac{1}{2}\alpha(\alpha-1)\eta_j^{1-\alpha}u^{\alpha-2}-
 \frac{\sqrt{\eta_j}}{8}u^{-\frac{3}{2}}<0,$ for $u>0,$ $j=\overline{1,N},$ then  functions  $G_j(u),$ $j=\overline{1,N}$ are monotonically increasing and concave on   $[0,+\infty).$ Finally we check condition $IV).$ As $\varphi(\sigma)$ taking the function $$\varphi(\sigma)=\sigma^{\max (\frac{1}{2},\alpha)},\quad \sigma\in[0,1]$$ we obtain
 $$G_j(\sigma u)=\frac{1}{2}(\sqrt{\sigma u\eta_j}+\sigma^\alpha u^\alpha\eta_j^{1-\alpha})\geq \frac{1}{2} \sigma^{\max (\frac{1}{2},\alpha)}(\sqrt{ u\eta_j}+ u^\alpha\eta_j^{1-\alpha})=$$
 $$= \varphi(\sigma)G_j(u),\quad \sigma\in[0,1],\quad u\in[0,+\infty),\quad j=\overline{1,N}.$$
 Now let us move on to example  $c_4).$ First, note that  $G_j(0)=0,$ $G_j(\eta_j)=\eta_j,$ $G_j\in C[0,+\infty),$ $j=\overline{1,N}.$ Since $G_j^\prime(u)=\gamma_j\alpha u^{\alpha-1}\eta_j^{\alpha-1}e^{-u^\alpha \eta_j^{1-\alpha}}$ and $G_j^{\prime\prime}(u)=\gamma_j\alpha(\alpha-1)u^{\alpha-2}\eta_j^{1-\alpha}e^{-u^\alpha \eta_j^{1-\alpha}}-\gamma_j (\alpha  u^{\alpha-1}\eta_j^{1-\alpha})^2e^{-u^\alpha \eta_j^{1-\alpha}}<0,$ when $u>0,$ $j=\overline{1,N},$ then $G_j(u),$ $j=\overline{1,N}$ are monotonically increasing and concave functions on  $[0,+\infty).$ To verify condition  $IV)$ we first note that for all $t\in[0,1]$ and $u\in[0,+\infty)$ the inequalities
\begin{equation}\label{Khachatryan83}
G_j(tu)\geq tG_j(u),\quad j=\overline{1,N}.
\end{equation}
hold.

Indeed, for  $t=0,$ $u=0$ and $t=1$ inequality \eqref{Khachatryan83} is obviously satisfied. If  $t\in(0,1)$ and $u\in(0,+\infty),$  then it immediately follows from properties  $I)-III)$ that
$$\frac{G_j(tu)}{tu}>\frac{G_j(u)}{u},\quad j=\overline{1,N},$$
whence \eqref{Khachatryan83} follows. Since for functions  $\tilde{G}_j(u):=\gamma_j(1-e^{-u}),$ $j=\overline{1,N}$ conditions $I)-III)$ are obviously satisfied, then
$$\tilde{G}_j(tu)\geq t\tilde{G}_j(u),\quad t\in[0,1],\quad u\in[0,+\infty),\quad j=\overline{1,N}.$$
Therefore, choosing as functions $\varphi(\sigma)=\sigma^\alpha$ we have
$$G_j(\sigma u)=\gamma_j(1-e^{-\sigma^\alpha u^\alpha\eta_j^{1-\alpha}})=\tilde{G}_j(\sigma^\alpha u^\alpha\eta_j^{1-\alpha})\geq \sigma^\alpha \tilde{G}_j( u^\alpha\eta_j^{1-\alpha})=\varphi(\sigma)G_j(u),$$$$\sigma\in[0,1],\quad u\in[0,+\infty),\quad j=\overline{1,N}.$$

\section*{Disclosure statement}

The authors of this work declare that they have no conflicts of interest.

\section*{Funding}

The research of Kh.~A. Khachatryan was supported by the Higher Education and Science Committee of the Republic of Armenia (project no. 23RL-1A027).


\begin{thebibliography}{99}
\bibitem{lan1}  Lancaster~P. Theory of matrices. Academic Press, New York-London; 1969.
\bibitem{khach2} Khachatryan~KhA,  Petrosyan~HS. On non-trivial solvability of one system of nonlinear integral equations on the real axis. Izv. Math. 2023;87:1062--1077.
\bibitem{vla3} Vladimirov~VS, Volovich~YaI. Nonlinear dynamics equation in p-adic string theory. Theoret. and Math. Phys. 2004;138:297--309.
\bibitem{brek4} Brekke~L, Freund~PGO. $P$-adic numbers in physics. Phys. Rep. 1993;233:1--66.
\bibitem{vla5}  Vladimirov~VS. Nonlinear equations for p-adic open, closed, and open-closed strings. Theoret. and Math. Phys. 2006;149:1604--1616.
\bibitem{khach6} Khachatryan~KhA. Solvability of some classes of nonlinear singular boundary value problems in the theory of p-adic open–closed strings. Theoret. and Math. Phys. 2019;200:1015--1025.
\bibitem{eng7} Engibaryan~NB. On a problem in nonlinear radiative transfer. Astrophysics. 1966;2:12--14.
\bibitem{cerc8} Cercignani~C. The Boltzmann equation and its applications. Appl. Math. Sci.,Springer, New-York; 1988.
\bibitem{khach9} Khachatryan~AKh, Khachatryan~KhA. Solvability of a nonlinear model Boltzmann equation in the problem of a plane shock wave. Theoret. and Math. Phys. 2016;189:1609--1623.
\bibitem{diek10} Diekmann~O. Run for your life. A note on the asymptotic speed of propagation of an epidemic. J. Differential Equations. 1979;33:58--73.
\bibitem{diek11} Diekmann~O., Kaper~HG. On the bounded solutions of a nonlinear convolution equation. Nonlinear Analysis. Theory Math. Appl. 1978;2:721--737.
\bibitem{atk12}  Atkinson~C, Reuter~GEH. Deterministic epidemic waves. Math. Proc. Cambridge Philos. Soc. 1976;80:315--330.
\bibitem{aref13}  Aref'eva~IYa,  Volovich~IV. Cosmological daemon, JHEP, 2011;2011:102pp.
\bibitem{aref14}  Aref'eva~IYa,  Volovich~IV. On nonlocal cosmological equations on half-line.  Journal of Samara State Technical University, Ser. Physical and Mathematical Sciences. 2011;1:16--27. (in Russian)
\bibitem{khach15} Khachatryan~KhA. On the solubility of certain classes of non-linear integral equations in p-adic string theory. Izv. Math. 2018;82:407--427.
\bibitem{khach16} Khachatryan~KhA. Existence and uniqueness of solution of a certain boundary-value problem for a convolution integral equation with monotone non-linearity. Izv. Math. 2020;84:807--815.
\bibitem{khach17} Khachatryan~KhA, Petrosyan~HS.  On the qualitative properties of the solution of a nonlinear boundary value problem in the dynamic theory of $p$-adic strings.
Vestnik Sankt-Peterburgskogo Universiteta. Seriya 10. Prikladnaya Matematika. Informatika. Protsessy Upravleniya. 2020;16:423--436. (in Russian)
\bibitem{petr18}  Petrosyan~HS,  Khachatryan~KhA. Uniqueness of the solution of a class of integral equations with sum-difference kernel and with convex nonlinearity on the positive half-line. Math. Notes. 2023;113:512--524.
\bibitem{khach19}   Khachatryan~KhA. Solvability of some nonlinear boundary value problems for singular integral equations of convolution type. Trans. Moscow Math. Soc. 2020;81:1--31.
\bibitem{khach20} Khachatryan~KhA. On solvability of one class of nonlinear integral equations on whole line with a weak singularity at zero. p-Adic Numbers, Ultrametric Analysis and Applications. 2017;9:292--305.
\bibitem{khach21} Petrosyan~HS,  Terdzhyan~TsE,  Khachatryan~KhA. Uniqueness of the solution of one system of integral equations on the semi-axis with convex nonlinearity,
Matematicheskie Trudy. 2020;23:187--203. (in Russian)
\bibitem{david22}  Davydov~AA,  Khachatryan~KhA,  Petrosyan~HS. On solutions of a system of nonlinear integral equations of convolution type on the entire real line.  Differential Equations. 2023;59:1504--1519.
\bibitem{kras23}  Krasnoselskii~MA. Positive solutions of operator equations. P. Noordhoff publishers; 1964.
\bibitem{rud24}  Rudin~W. Functional analysis, McGraw-Hill Series in Higher Mathematics, McGraw-Hill Book Co., New-York-Dusseldorf-Johannesburg; 1973.
\bibitem{kol25}   Kolmogorov~AN,  Fomin~SV. Introductory real analysis, Prentice-Hall, Inc., Englewood Cliffs, New York; 1970.
\bibitem{arb26}  Arabadzhyan~LG,  Khachatryan~AS. A class of integral equations of convolution type. Sb. Math. 2007;198:949--966.
\bibitem{eng27}  Engibaryan~NB. Renewal equations on the semi-axis. Izv. Math. 1999;63:57--71.
\bibitem{khach28} Khachatryan~AKh,   Khachatryan~KhA. A system of integral equations on the entire axis with convex and monotone nonlinearity. Journal of Contemporary Mathematical Analysis. 2022;57:311--322.
\end{thebibliography}
\end{document}